\newtheorem{defi}{Definition}[section]
\newtheorem{thm}[defi]{Theorem}
\newtheorem{lemma}[defi]{Lemma}
\newtheorem{cor}[defi]{Corollary}
\theoremstyle{remark}
\newtheorem{rem}[defi]{Remark}
\DeclareFontFamily{OT2}{cmr}{\hyphenchar\font45 }
\DeclareFontShape{OT2}{cmr}{m}{n}{%
	<5><6><7><8><9>gen*wncyr%
	<10><10.95><12><14.4><17.28><20.74><24.88>wncyr10}{}
\DeclareFontShape{OT2}{cmr}{b}{n}{%
	<5><6><7><8><9>gen*wncyb%
	<10><10.95><12><14.4><17.28><20.74><24.88>wncyb10}{}
\DeclareMathAlphabet{\mathcyr}{OT2}{cmr}{m}{n}
\DeclareMathAlphabet{\mathcyb}{OT2}{cmr}{b}{n}
\SetMathAlphabet{\mathcyr}{bold}{OT2}{cmr}{b}{n}
\newcommand{\qbinom}[2]{\genfrac{[}{]}{0pt}{}{#1}{#2}} %q-binomial coefficients
\title{A $q$-analogue of Boyadzhiev-Mneimneh-type binomial sums of
finite multi-polylogarithms}
\date{\empty}
\author{Ken Kamano}
\begin{document}

\maketitle

\begin{abstract}
We give a formula for a $q$-analogue of 
Boyadzhiev-Mneimneh-type binomial sums of 
finite multi-polylogarithms.
In the limit as $q \to 1$, this formula reduces to an identity
equivalent to the Sakugawa-Seki identities.
We also give a formula for Boyadzhiev-Mneimneh-type sums
corresponding to the Cauchy binomial theorem.
\end{abstract}

%%%%%%%%%%%%%%%%%%%%%%
%%%%%%%%%%%%%%%%%%%%%%
\section{Introduction and main results}
%%%%%%%%%%%%%%%%%%%%%%
%%%%%%%%%%%%%%%%%%%%%%

Let $n$ be a positive integer and $p$ a real number with $0\le p\le 1$.
Mneimneh \cite{M} proved the following simple but non-trivial identity:
\begin{equation}\label{eq:Mneimneh}
\sum_{k=1}^n H_k \binom{n}{k} p^k(1-p)^{n-k}
= \sum_{i=1}^n \frac{1-(1-p)^i}{i},
\end{equation}
where $H_k:=\sum_{i=1}^k 1/i$ is the $k$-th harmonic number.
The left-hand side of \eqref{eq:Mneimneh}
is the mean of $H_k$ under 
the binomial random variable $S_n$,
and it has some applications to probabilistic problems
(see \cite{M}).
It is worth noting that
Boyadzhiev \cite[Prop.~6]{Boya} proved
the following identity, which is a generalization of 
\eqref{eq:Mneimneh}:
\begin{align}\label{eq:Boya}
\sum_{k=1}^n\binom{n}{k}H_k \lambda^{n-k} \mu^k
= H_n(\lambda+\mu)^{n} - 
\sum_{j=1}^{n}\frac{\lambda^j(\lambda+\mu)^{n-j}}{j}
\ \ (n\ge 1,\ \lambda,\mu\in\mathbb{C}).
\end{align}

Recently, Mneimneh's identity \eqref{eq:Mneimneh}
and its generalization \eqref{eq:Boya}
have been reproved and 
extended to hyperharmonic numbers, multiple harmonic sums
and finite analogues of polylogarithms
(see e.g., \cite{C}, \cite{KW},
\cite{PX} and \cite{TZ}).
For $\boldsymbol{s}=(s_1,\ldots, s_d) \in \mathbb{Z}_{>0}^d$ and $a\in\mathbb{R}$, Gen\v{c}ev \cite{G} introduced the following Boyadzhiev-Mneimneh-type sums:
\begin{align*}
M_n^{(\boldsymbol{s})}(a,p):=
\sum_{k=1}^n \binom{n}{k} p^k(1-p)^{n-k}
\zeta_k^{\star}(s_1,\ldots, s_d;a),
\end{align*}
where 
\[ \zeta_k^{\star}(s_1,\ldots, s_d;a)
:= \sum_{k\ge n_1\ge \cdots \ge n_d\ge 1}
\frac{a^{n_d}}{n_1^{s_1}\cdots n_d^{s_d}}.
\]
When $s_1\ge 2$ and $a=1$, the value
\[ \zeta^{\star}(s_1,\ldots, s_d):=\lim_{k\to\infty} 
\zeta_k^{\star}(s_1,\ldots, s_d;1)
=\sum_{n_1\ge \cdots \ge n_d\ge 1}
\frac{1}{n_1^{s_1}\cdots n_d^{s_d}}\]
is convergent and
known as a \textit{multiple zeta star value of 
index} $\boldsymbol{s}=(s_1,\ldots ,s_d)$.

For an index $\boldsymbol{s}=(s_1,\ldots ,s_d) \in \mathbb{Z}_{>0}^d$, 
we call $\text{dep } \boldsymbol{s}:=d$ its depth.
We also use the notation
$l(0):=0$, $l(i):=s_1+\cdots +s_i$\ ($1\le i \le d$) and $w:=l(d)=s_1+\cdots +s_d$.   
As a generalization of \eqref{eq:Mneimneh},
Gen\v{c}ev \cite{G} proved the following formula,
which generalizes the one conjectured by Pan and Xu \cite[Sec.~4]{PX}.

\begin{thm}[{\cite[Theorem 2]{G}}]\label{thm:Gencev's formula}
For any index $\boldsymbol{s}=(s_1,\ldots ,s_d) \in \mathbb{Z}_{>0}^d$, $n\in\mathbb{Z}_{>0}$ and real numbers $a$ and $p$,
the following identity holds:
\begin{align*}
M_n^{(\boldsymbol{s})}(a,p)=
\sum_{n\ge n_1\ge \cdots \ge n_w\ge 1}
\dfrac{ \prod_{r=1}^d (1-p)^{n_{l(r-1)+1}-n_{l(r)}}  }
      { n_1\cdots n_w}
\left(  (1-p+ap)^{n_w}-(1-p)^{n_w} \right).
\end{align*}
\end{thm}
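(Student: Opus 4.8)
The plan is to first interchange the order of summation, then reduce everything to a single analytic building block, and finally run an Euler-operator induction in the variable $a$. Interchanging the $k$-sum and the $\zeta^\star$-sum is harmless: since the term indexed by a fixed tuple $n\ge m_1\ge\cdots\ge m_d\ge 1$ occurs in $\zeta_k^{\star}(\boldsymbol{s};a)$ for every $k\ge m_1$, one obtains
\[ M_n^{(\boldsymbol{s})}(a,p)=\sum_{n\ge m_1\ge\cdots\ge m_d\ge 1}\frac{a^{m_d}}{m_1^{s_1}\cdots m_d^{s_d}}\sum_{k=m_1}^n\binom{n}{k}p^k(1-p)^{n-k}. \]
Writing $q:=1-p$, the key analytic input is the integral representation of the binomial tail: for $1\le m\le n$,
\[ \sum_{k=m}^n\binom{n}{k}p^k q^{n-k}=m\binom{n}{m}\int_0^p t^{m-1}(1-t)^{n-m}\,dt, \]
which follows because both sides vanish at $p=0$ and have the same $p$-derivative (the left side telescopes). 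The factor $m$ produced here is exactly what cancels one power of $m_1$ in the summand.

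Next I would settle the seed case $\boldsymbol{s}=(1)$. Substituting the representation above and applying the binomial theorem gives $M_n^{(1)}(a,p)=\int_0^p \frac{(1-t+at)^n-(1-t)^n}{t}\,dt$, and the substitutions $u=1-t+at$ and $u=1-t$ turn this into $\int_q^{\,1-p+ap}\frac{1-u^n}{1-u}\,du=\sum_{n_1=1}^n \frac{(1-p+ap)^{n_1}-q^{n_1}}{n_1}$, which is precisely the claimed right-hand side when $d=1$, $s_1=1$ (and recovers \eqref{eq:Mneimneh} at $a=1$). This is the only place the binomial theorem is used, and it is what produces the two bases $1-p+ap$ and $1-p$ in the final factor.

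For general $\boldsymbol{s}$ I would induct using the Euler operator $\theta:=a\,\frac{d}{da}$. On the left $a$ enters only through $a^{m_d}$, and on the right only through $(1-p+ap)^{n_w}$, so $\theta$ acts purely on the innermost variable. On the left $\theta\,M_n^{(s_1,\ldots,s_d)}=M_n^{(s_1,\ldots,s_d-1)}$ whenever $s_d\ge 2$. A short computation shows the right-hand side obeys the same rule: applying $\theta$ brings down a factor $n_w$ cancelling $1/n_w$ and replaces $1-p+ap$ by $p$; writing $ap=(1-p+ap)-q$ and summing the innermost $n_w$ out as a geometric series collapses the last block from size $s_d$ to size $s_d-1$ and turns $(1-p)^{n_{l(d-1)+1}-n_w}$ into $(1-p)^{n_{l(d-1)+1}-n_{w-1}}$. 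Since both sides tend to $0$ as $a\to 0$, this reduces every index to one whose last entry equals $1$.

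It remains to pass from an index ending in $1$ to one of smaller depth, which I expect to be the main obstacle. Applying $\theta$ to $M_n^{(s_1,\ldots,s_{d-1},1)}$ removes the $1/m_d$ and, after summing the geometric series in $m_d$, yields $\frac{a}{a-1}\big(M_n^{(s_1,\ldots,s_{d-1})}(a,p)-M_n^{(s_1,\ldots,s_{d-1})}(1,p)\big)$; integrating back gives $M_n^{(s_1,\ldots,s_{d-1},1)}(a,p)=\int_0^a \frac{M_n^{(s_1,\ldots,s_{d-1})}(x,p)-M_n^{(s_1,\ldots,s_{d-1})}(1,p)}{x-1}\,dx$. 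By the induction hypothesis the integrand is the depth-$(d-1)$ right-hand side, and the form $dx/(x-1)$ is exactly the operation that appends a trailing entry $1$ in the iterated-integral description of these nested sums. The crux is therefore to perform the corresponding $(1-p)$-weight bookkeeping on the right-hand side, verifying that adjoining the single-variable block $n_w$ with factor $(1-p)^{n_w-n_w}=1$ and weight $1/n_w$ realizes precisely this $\int_0^a dx/(x-1)$ step on the depth-$(d-1)$ sum. Once the last-entry recursion of the third paragraph and this depth-lowering recursion are matched on both sides, the identity follows for all indices by a double induction on the depth $d$ and on $s_d$, with the base cases supplied by the seed computation.
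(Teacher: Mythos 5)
Your proof is correct, but it takes a genuinely different route from the paper's. The paper does not prove Theorem \ref{thm:Gencev's formula} directly: it establishes the noncommutative $q$-identity (Theorem \ref{thm:mainthm}) by induction on $n$ and the depth, using the recurrence of Lemma \ref{lemma:induction_lemma} (itself built from the $q$-Pascal rule \eqref{eq:q-binomial_rec} and Lemma \ref{lemma:keylemma_a}), then passes to the limit $q\to 1$ (Corollary \ref{cor:maincor}) and specializes $\boldsymbol{t}=(1,\ldots,1,a)$, $x=p$, $y=1-p$. You stay entirely at $q=1$ and trade the recurrence in $n$ for an induction in the parameter $a$ governed by the Euler operator $\theta=a\,d/da$, seeded by the binomial-tail (incomplete beta) integral and the $\boldsymbol{s}=(1)$ case; your recursion first lowers $s_d$, then the depth, where the paper's lowers $n$, then the depth. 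What your approach buys is an elementary, self-contained proof of the classical statement with no noncommuting variables; what it gives up is everything $q$-related: the paper's heavier route yields the stronger identity in $R_q[x,y]$, of which this theorem is a double specialization.

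The single step you flag without carrying out --- that the right-hand side obeys the same depth-lowering rule as the left --- does hold, so there is no actual gap. Denote by $G_n^{(\boldsymbol{s})}(a,p)$ the right-hand side of the theorem. For the index $(s_1,\ldots,s_{d-1},1)$, the $r=d$ factor of the weight product is $(1-p)^{n_w-n_w}=1$, so applying $\theta$ (which turns the final factor into $n_w\,ap\,(1-p+ap)^{n_w-1}$ and cancels $1/n_w$) and summing out $n_w$ gives
\begin{align*}
\theta\, G_n^{(s_1,\ldots,s_{d-1},1)}(a,p)
&=\sum_{n\ge n_1\ge\cdots\ge n_{w-1}\ge 1}
\frac{\prod_{r=1}^{d-1}(1-p)^{n_{l(r-1)+1}-n_{l(r)}}}{n_1\cdots n_{w-1}}
\sum_{n_w=1}^{n_{w-1}} ap\,(1-p+ap)^{n_w-1}\\
&=\sum_{n\ge n_1\ge\cdots\ge n_{w-1}\ge 1}
\frac{\prod_{r=1}^{d-1}(1-p)^{n_{l(r-1)+1}-n_{l(r)}}}{n_1\cdots n_{w-1}}\cdot
\frac{a}{a-1}\Bigl((1-p+ap)^{n_{w-1}}-1\Bigr),
\end{align*}
using $(1-p+ap)-1=p(a-1)$ in the geometric sum. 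Since $(1-p+ap)^{n_{w-1}}-1$ is precisely the difference of the depth-$(d-1)$ final factors at $a$ and at $a=1$ (the weight product being independent of $a$), the last line equals $\frac{a}{a-1}\bigl(G_n^{(s_1,\ldots,s_{d-1})}(a,p)-G_n^{(s_1,\ldots,s_{d-1})}(1,p)\bigr)$, which is exactly the rule you computed for $M_n^{(s_1,\ldots,s_{d-1},1)}$. Both sides are polynomials in $a$ vanishing at $a=0$, so this step closes; together with your (correct) $s_d\ge 2$ recursion and the seed computation, the double induction on depth and on $s_d$ is complete.
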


Let $q$ be a complex number with $|q|\neq 0, 1$.
For a non-negative integer $n$, set a $q$-integer 
$[n]:=\frac{1-q^n}{1-q}=1+q+\cdots +q^{n-1}$.
The $q$-factorial $[n]!$ is defined by
$[n]!:=[n][n-1]\cdots [1]$ ($n\ge 1$) and $[0]!:=1$.
The $q$-binomial coefficients $\qbinom{n}{k}$ are defined by
$\qbinom{n}{k}:=\frac{[n]!}{[k]![n-k]!}$
for $n\ge k \ge 0$. We set $\qbinom{n}{k}=0$ if $k>n$,
as usual. In the limit as $q \to 1$, the values
$[n]$, $[n]!$ and $\qbinom{n}{k}$ reduce to the ordinary
$n$, $n!$ and $\binom{n}{k}$, respectively.

Let $R$ be a polynomial ring $\mathbb{C}[t_1,\ldots ,t_d]$ in $d$ variables over $\mathbb{C}$.
We denote by
$\mathbb{C}_q[x,y]$ (resp.~$R_q[x,y]$) the associative
unital algebra over $\mathbb{C}$ (resp.~$R$) generated
by $x$ and $y$ with a relation $yx=qxy$.
For example, it holds that $yx^2 = qxyx = q^2x^2y$ 
in $\mathbb{C}_q[x,y]$ or $R_q[x,y]$.

The following theorem, attributed to 
Sch\"{u}tzenberger \cite{S}, is a well-known analogue
of the classical binomial theorem (see also e.g.,~\cite{K}).
\begin{thm}
The following identity holds in $\mathbb{C}_q[x,y]$:
\begin{align}\label{eq:q-binomial_nc}
(x+y)^n=\sum_{k=0}^n \qbinom{n}{k}x^ky^{n-k}\ \ (n\ge 0).
\end{align}
\end{thm}

For $k\in\mathbb{Z}_{>0}$, $\boldsymbol{s}=(s_1,\ldots ,s_d)\in \mathbb{Z}_{>0}^d$ and 
$\boldsymbol{t}=(t_1,\ldots ,t_d)$, 
define a $q$-analogue of finite multi-polylogarithms
as
\begin{align*}
 l_k^{\star,q}(\boldsymbol{s},\boldsymbol{t})
 :=\sum_{k\ge n_1\ge \cdots \ge n_d\ge 1}
   \dfrac{t_1^{n_1-n_2} \cdots  t_{d-1}^{n_{d-1}-n_d} t_d^{n_d}}  { [n_1]^{s_1} \cdots [n_d]^{s_d}}\ \in R.
\end{align*}
We set $l_k^{\star}(\boldsymbol{s},\boldsymbol{t}):= \lim_{q\to 1} l_k^{\star,q}(\boldsymbol{s},\boldsymbol{t})$, i.e.,
\begin{align*}
l_k^{\star}(\boldsymbol{s},\boldsymbol{t})
=\sum_{k\ge n_1\ge \cdots \ge n_d\ge 1}
   \dfrac{t_1^{n_1-n_2} \cdots  t_{d-1}^{n_{d-1}-n_d} t_d^{n_d}}  { n_1^{s_1} \cdots n_d^{s_d}}.
\end{align*}
The function $l_k^{\star}(\boldsymbol{s},\boldsymbol{t})
$ is a finite analogue of
the so-called multi-polylogarithms of shuffle type.
We note that
$l_k^{\star}(\boldsymbol{s},(1,\ldots , 1, a))
= \zeta_k^{\star}(\boldsymbol{s},a)$.
We define Boyadzhiev-Mneimneh-type binomial sums of $l_k^{\star,q}(\boldsymbol{s},\boldsymbol{t})$ as
\begin{align*} 
M_n^{q}(\boldsymbol{s}, \boldsymbol{t}; x,y)
:= \sum_{k=1}^n \qbinom{n}{k}x^ky^{n-k}
   l_k^{\star,q}(\boldsymbol{s},\boldsymbol{t})\ \ (n\ge 0)
\end{align*}
as an element of $R_q[x,y]$.
In the limit as $q \to 1$, the variables $x$ and $y$ commute,
so we can consider
$M_n(\boldsymbol{s}, \boldsymbol{t}; x,y):=
\lim_{q\to 1} M_n^{q}(\boldsymbol{s}, \boldsymbol{t}; x,y)$ as an element of the ordinary polynomial ring $R[x,y]$.

The aim of the present paper is 
to give an expression of Boyadzhiev-Mneimneh-type binomial sums
$M_n^{q}(\boldsymbol{s}, \boldsymbol{t}; x,y)$,
which generalizes Theorem \ref{thm:Gencev's formula}.
The following is the main theorem of the present paper.

\begin{thm}\label{thm:mainthm}
For $n\ge 1$, the following identity holds in $R_q[x,y]$:
\begin{equation}\label{eq:mainthm}
\begin{split}
M_n^{q}(\boldsymbol{s}, \boldsymbol{t}; x,y)
&=\sum_{n\ge n_1\ge \cdots \ge n_w\ge 1}
 \frac{(x+y)^{n-n_1}}{ [n_1]\cdots [n_w]}
  \left( \prod_{r=1}^{d-1} y^{n_{l(r-1)+1}-n_
{l(r)}}(t_rx+y)^{n_{l(r)}-n_{l(r)+1}} \right)\\[10pt]
&\hspace{20pt}  \times y^{n_{l(d-1)+1}-n_{l(d)}}
  \left( (t_dx+y)^{n_{l(d)}} - y^{n_{l(d)}}   \right),
\end{split}
\end{equation}
where the order in the product symbol is defined as
$\prod_{j=1}^{v}X_j=X_1X_2\cdots X_v$.
\end{thm}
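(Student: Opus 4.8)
The plan is to start from the definition of $M_n^{q}(\boldsymbol{s},\boldsymbol{t};x,y)$, interchange the order of summation, and then reduce the desired identity to a statement purely about $q$-binomial coefficients. Writing $l_k^{\star,q}(\boldsymbol{s},\boldsymbol{t})$ as its defining nested sum over $k\ge m_1\ge\cdots\ge m_d\ge 1$ and substituting it into $M_n^{q}$, I would first swap the summation over $k$ with the summation over the chain $(m_1,\ldots,m_d)$. Since the monomials $x^ky^{n-k}$ do not depend on the chain and $l_k^{\star,q}\in R$ commutes with $x$ and $y$, this yields
\[
M_n^{q}(\boldsymbol{s},\boldsymbol{t};x,y)=\sum_{n\ge m_1\ge\cdots\ge m_d\ge 1}\frac{t_1^{m_1-m_2}\cdots t_d^{m_d}}{[m_1]^{s_1}\cdots[m_d]^{s_d}}\,C_{m_1},\qquad C_{m}:=\sum_{k=m}^{n}\qbinom{n}{k}x^ky^{n-k},
\]
so that the left-hand side is governed by the tails $C_{m}$ of the $q$-binomial expansion \eqref{eq:q-binomial_nc}. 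The target is to identify this with the weight-$w$ expression on the right of \eqref{eq:mainthm}.

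The engine of the proof is coefficient extraction. Using the defining relation $yx=qxy$, every homogeneous element of degree $n$ in $R_q[x,y]$ is written uniquely in the basis $\{x^ay^{n-a}\}_{a=0}^n$, so it suffices to compare the coefficient of $x^ay^{n-a}$ on both sides of \eqref{eq:mainthm}. On the left this coefficient is simply $\qbinom{n}{a}\,l_a^{\star,q}(\boldsymbol{s},\boldsymbol{t})$. On the right I would expand each factor $(x+y)^{\bullet}$, $y^{\bullet}$ and $(t_rx+y)^{\bullet}$ by the noncommutative $q$-binomial theorem \eqref{eq:q-binomial_nc} (note that $t_rx$ and $y$ again satisfy $y(t_rx)=q(t_rx)y$, so the theorem applies verbatim), and then normalize each resulting monomial to the form $q^{(\cdots)}x^ay^{n-a}$ by commuting the $y$'s past the $x$'s. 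The coefficient of $x^ay^{n-a}$ on the right then becomes an explicit nested sum, over the weight-$w$ chain $n\ge n_1\ge\cdots\ge n_w\ge 1$ and over the binomial expansion indices, of products of $q$-binomial coefficients weighted by powers of $q$ and by the $t_r$.

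It then remains to establish the resulting identity among $q$-binomial coefficients: that the weight-$w$ chain sum produced on the right equals $\qbinom{n}{a}\,l_a^{\star,q}(\boldsymbol{s},\boldsymbol{t})$. I would prove this by induction on the weight $w$, summing out the chain variables one level at a time, each reduction being an application of the $q$-Chu--Vandermonde identity---itself obtained by comparing coefficients in $(x+y)^{a}(x+y)^{b}=(x+y)^{a+b}$ via \eqref{eq:q-binomial_nc}---together with elementary $q$-integer (geometric) summations. These evaluations reassemble the free weight-$w$ chain, whose denominator is $[n_1]\cdots[n_w]$, into the depth-$d$ nested sum with denominators $[m_1]^{s_1}\cdots[m_d]^{s_d}$, and regenerate the prefactor $\qbinom{n}{a}$. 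The boundary factor $(t_dx+y)^{n_{l(d)}}-y^{n_{l(d)}}$ is precisely what enforces the lower bound $\ge 1$ of the innermost summation variable, and its role must be tracked carefully through the induction.

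The main obstacle I anticipate is twofold. First, the noncommutative bookkeeping: keeping exact control of the powers of $q$ generated when a long word in $x$ and $y$ is reordered into the basis $x^ay^{n-a}$. Second, and more essential, the reassembly step is genuinely global rather than term-by-term---already in the simplest case $d=1$, $s_1=1$ the equality of the two sides fails monomial-by-monomial and holds only after the full $\tfrac{1}{[m]}$-weighted summation---so the inductive $q$-Chu--Vandermonde evaluations must be organized so that this collapse is transparent rather than a morass of indices. As consistency checks I would verify the base case $d=1$, $s_1=1$ directly, and confirm that setting $q\to 1$, $x=p$, $y=1-p$ and $\boldsymbol{t}=(1,\ldots,1,a)$ recovers Gen\v{c}ev's Theorem \ref{thm:Gencev's formula}.
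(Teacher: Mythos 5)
Your reduction steps are sound: interchanging the $k$-sum with the chain sum, using that $\{x^ay^{n-a}\}_{a=0}^n$ is a basis of the degree-$n$ component of $R_q[x,y]$, identifying the left-hand coefficient as $\qbinom{n}{a}\,l_a^{\star,q}(\boldsymbol{s},\boldsymbol{t})$, and expanding the right-hand side by \eqref{eq:q-binomial_nc} (including the observation that $y(t_rx)=q(t_rx)y$). But the proposal stops exactly where the content of the theorem lies. After extracting the coefficient of $x^ay^{n-a}$ and of a fixed $t$-monomial (legitimate, since the $t_i$ are free variables), the statement that must be proved is a \emph{weighted} $q$-binomial convolution; already in the simplest case $d=1$, $s_1=1$ it reads
\begin{equation*}
\sum_{n_1=m}^{n}\frac{q^{m(n-n_1-a+m)}}{[n_1]}\qbinom{n-n_1}{a-m}\qbinom{n_1}{m}
=\frac{1}{[m]}\qbinom{n}{a}\qquad(1\le m\le a\le n),
\end{equation*}
and for a general index it becomes a $w$-fold sum with denominator $[n_1]\cdots[n_w]$. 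You never write down this identity, and the tool you name for it, the $q$-Chu--Vandermonde identity, does not apply as such: Chu--Vandermonde convolutions carry no $1/[n_i]$ weights, and those weights are the whole point --- they are what encode the harmonic/polylogarithmic structure. To sum out even one chain variable you need an absorption step such as $\frac{1}{[n_1]}\qbinom{n_1}{m}=\frac{1}{[m]}\qbinom{n_1-1}{m-1}$ combined with the iterated Pascal rule \eqref{eq:q-binomial_rec}; this is precisely the content of Lemma \ref{lemma:keylemma_a} of the paper, which is proved there by induction on $s$. You yourself flag this ``reassembly'' as the main obstacle and leave it unresolved, so what you have is a reduction of the theorem to an unproven identity of essentially the same depth: a genuine gap.

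For comparison, the paper never extracts coefficients at all. It proves the weighted identity once and for all (Lemma \ref{lemma:keylemma_a}), uses it together with \eqref{eq:q-binomial_rec} to derive the recurrence of Lemma \ref{lemma:induction_lemma}, which expresses $M_n^{q}(\boldsymbol{s},\boldsymbol{t};x,y)-(x+y)M_{n-1}^{q}(\boldsymbol{s},\boldsymbol{t};x,y)$ through Boyadzhiev--Mneimneh sums of strictly smaller depth, and then closes with a double induction on $n$ and $\text{dep}\,\boldsymbol{s}$. Your coefficient-extraction route could likely be completed --- the displayed identity above does follow from a $q$-Vandermonde convolution \emph{after} the absorption step --- but organizing the $w$-fold induction with exact control of the $q$-powers is essentially equivalent to reproving Lemma \ref{lemma:keylemma_a}; until that identity is stated and proved, the argument is not a proof.
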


Taking the limit as $q\to 1$ in Eq.~\eqref{eq:mainthm}
yields the following corollary. 
Applying $\boldsymbol{t} =(1,\ldots , 1,a)$, $x=p$ and $y=1-p$
in this corollary, 
we can obtain Theorem \ref{thm:Gencev's formula}.

\begin{cor}\label{cor:maincor}
For $n\ge 1$, the following identity holds in 
$R[x,y]$:
\begin{equation}\label{eq:maincor}
\begin{split}
    M_n(\boldsymbol{s}, \boldsymbol{t}; x,y)
&=\sum_{n\ge n_1\ge \cdots \ge n_w\ge 1}
 \frac{(x+y)^{n-n_1} y^
{n_1+n_{l(1)+1}+\cdots +n_{l(d-1)+1}-n_{l(1)}-\cdots -n_{l(d)}}}{n_1\cdots n_w} \\[10pt]
&\hspace{20pt}  \times \left( \prod_{r=1}^{d-1} (t_rx+y)^{n_{l(r)}-n_{l(r)+1}}\right)
  \left((t_dx+y)^{n_{l(d)}} - y^{n_{l(d)}}\right).
\end{split}
\end{equation}
\end{cor}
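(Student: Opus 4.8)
The plan is to deduce the corollary directly from Theorem~\ref{thm:mainthm} by passing to the limit $q\to 1$ on both sides of \eqref{eq:mainthm}. For a fixed $n$ the index set $\{(n_1,\dots,n_w): n\ge n_1\ge\cdots\ge n_w\ge 1\}$ is finite, so the limit may be taken summand by summand; thus it suffices to compute the $q\to 1$ limit of a single term on the right-hand side of \eqref{eq:mainthm}.

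First I would dispose of the two sources of $q$-dependence in each summand. The denominator $[n_1]\cdots[n_w]$ converges to $n_1\cdots n_w$, since $[m]\to m$ as $q\to 1$. For the numerator, recall from the setup that the defining relation $yx=qxy$ of $R_q[x,y]$ degenerates to $yx=xy$ in the limit, so each summand—a finite product of the factors $(x+y)^{n-n_1}$, the standalone powers of $y$, the binomials $(t_rx+y)$, and the final bracket—may be regarded as an element of the commutative ring $R[x,y]$, in which all of these factors commute and can be freely reordered.

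The remaining work is pure bookkeeping of the exponents of $y$. The standalone powers of $y$ occurring in \eqref{eq:mainthm} are $y^{n_{l(r-1)+1}-n_{l(r)}}$ for $r=1,\dots,d-1$ (inside the product) together with $y^{n_{l(d-1)+1}-n_{l(d)}}$ (on the second line). Collecting them into a single factor $y^{E}$ with
\[
E=\sum_{r=1}^{d}\bigl(n_{l(r-1)+1}-n_{l(r)}\bigr)
 =\sum_{r=1}^{d} n_{l(r-1)+1}-\sum_{r=1}^{d} n_{l(r)},
\]
and using $l(0)=0$ (so that the $r=1$ term is $n_{1}$) together with $l(d)=w$, one obtains
\[
E=n_1+n_{l(1)+1}+\cdots+n_{l(d-1)+1}-n_{l(1)}-\cdots-n_{l(d)},
\]
which is exactly the exponent of $y$ appearing in \eqref{eq:maincor}. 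The surviving factors $\prod_{r=1}^{d-1}(t_rx+y)^{n_{l(r)}-n_{l(r)+1}}$ and the final bracket $\bigl((t_dx+y)^{n_{l(d)}}-y^{n_{l(d)}}\bigr)$, together with $(x+y)^{n-n_1}/(n_1\cdots n_w)$, carry over unchanged, and assembling the summands gives \eqref{eq:maincor}.

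I do not expect a genuine obstacle here, since the heavy lifting is already done in Theorem~\ref{thm:mainthm}. The only points needing care are the legitimacy of the termwise passage to the limit (immediate from the finiteness of the index set) and the correct amalgamation of the exponents of $y$, in particular keeping track of the edge index $l(0)+1=1$ that produces the leading $n_1$ in $E$. Everything else is a direct specialization of the main theorem.
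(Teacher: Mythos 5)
Your proposal is correct and follows exactly the paper's route: the paper obtains Corollary~\ref{cor:maincor} precisely by letting $q\to 1$ in Eq.~\eqref{eq:mainthm}, with the variables commuting in the limit so that the standalone powers of $y$ can be collected into the single exponent $n_1+n_{l(1)+1}+\cdots+n_{l(d-1)+1}-n_{l(1)}-\cdots-n_{l(d)}$. Your exponent bookkeeping and the termwise passage to the limit (justified by finiteness of the index set) match what the paper leaves implicit.
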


The present paper is organized as follows.
In Section \ref{sec:Pf_mainhhm}
we prove our main theorem (Theorem \ref{thm:mainthm}).
In Section \ref{sec:Sakugawa-Seki}
we discuss the connection between our main results and 
identities given by Sakugawa and Seki \cite{SS}.
More precisely, we show that Eq.~\eqref{eq:maincor} in Corollary
\ref{cor:maincor} and 
the Sakugawa-Seki identities are equivalent.
In Section \ref{sec:Cauchybt}
we give a formula for Boyadzhiev-Mneimneh-type Cauchy binomial sums of
$l_n^{\star, q}$.

%%%%%%%%%%%%%%%%%%%%%%
%%%%%%%%%%%%%%%%%%%%%%
\section{Proof of Main Theorem}\label{sec:Pf_mainhhm}
%%%%%%%%%%%%%%%%%%%%%%
%%%%%%%%%%%%%%%%%%%%%%

It is well-known that $q$-binomial coefficients satisfy the following identity:
\begin{align}\label{eq:q-binomial_rec}
   \qbinom{n}{k} = q^k\qbinom{n-1}{k} + \qbinom{n-1}{k-1}
\ \ \ (n, k \ge 1).
\end{align}
By using this identity, we obtain the following lemma.

\begin{lemma}\label{lemma:keylemma_a}
For positive integers $n$ and $k$ with $n\ge k\ge 1$, we have
\begin{align}\label{eq:keylemma_a}
         \qbinom{n}{k}\dfrac{1}{[k]^s}
    =\displaystyle\sum_{n\ge n_1\ge \cdots \ge n_{s}\ge k}
        \dfrac{q^{(n-n_{s})k}}
              {[n_1]\cdots [n_{s}]} 
        \qbinom{n_{s}}{k}\ \ \ (s\ge 1).
\end{align}
\end{lemma}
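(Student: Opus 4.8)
The plan is to prove \eqref{eq:keylemma_a} by a two-stage induction: first settle the base case $s=1$, then bootstrap to arbitrary $s$ by induction on the depth $s$. For the $s=1$ case the claim reads
\[
\qbinom{n}{k}\frac{1}{[k]} = \sum_{n_1=k}^{n}\frac{q^{(n-n_1)k}}{[n_1]}\qbinom{n_1}{k},
\]
and I would prove this by induction on $n$ for fixed $k$. The case $n=k$ is immediate, since both sides equal $1/[k]$. For the inductive step I would split off the top term $n_1=n$ from the sum and factor $q^k$ out of the remainder, rewriting the right-hand side as $\tfrac{1}{[n]}\qbinom{n}{k} + q^k F(n-1)$, where $F(n-1)$ is the analogous sum with $n$ replaced by $n-1$. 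Invoking the inductive hypothesis $F(n-1)=\qbinom{n-1}{k}/[k]$ then reduces the goal to the purely algebraic identity
\[
\frac{1}{[n]}\qbinom{n}{k} = \frac{1}{[k]}\left(\qbinom{n}{k} - q^k\qbinom{n-1}{k}\right).
\]
By the recurrence \eqref{eq:q-binomial_rec} the bracket collapses to $\qbinom{n-1}{k-1}$, so what remains is $[k]\qbinom{n}{k}=[n]\qbinom{n-1}{k-1}$, which is read off directly from the definition $\qbinom{n}{k}=[n]!/([k]![n-k]!)$.

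With the $s=1$ case established, I would treat general $s$ by induction on $s$. Multiplying the depth-$(s-1)$ form of \eqref{eq:keylemma_a} by $1/[k]$ and then applying the $s=1$ identity to the inner factor $\qbinom{n_{s-1}}{k}/[k]$ introduces a fresh innermost index $n_s$ constrained by $n_{s-1}\ge n_s\ge k$, producing precisely the nested summation range on the right-hand side. The one point requiring care is the combination of the two $q$-factors: the exponent from the hypothesis, $q^{(n-n_{s-1})k}$, multiplies the exponent $q^{(n_{s-1}-n_s)k}$ from the $s=1$ substitution to give $q^{(n-n_s)k}$, so the intermediate variable $n_{s-1}$ cancels out of the exponent and leaves exactly the weight $q^{(n-n_s)k}$ demanded by the statement at depth $s$.

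I expect the main obstacle to be bookkeeping rather than anything conceptual: tracking the nested inequalities $n\ge n_1\ge\cdots\ge n_s\ge k$ and verifying that the $q$-exponents telescope correctly when passing from $s-1$ to $s$. All of the genuine algebraic content is concentrated in the base case, where the $q$-binomial recurrence \eqref{eq:q-binomial_rec} (equivalently the relation $[k]\qbinom{n}{k}=[n]\qbinom{n-1}{k-1}$) carries the argument.
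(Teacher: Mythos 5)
Your proposal is correct and follows essentially the same route as the paper: both prove the $s=1$ case from the $q$-Pascal recurrence \eqref{eq:q-binomial_rec} together with $[k]\qbinom{n}{k}=[n]\qbinom{n-1}{k-1}$, and then induct on $s$ by multiplying by $1/[k]$ and re-applying the $s=1$ identity to the innermost factor, with the $q$-exponents telescoping. The only cosmetic difference is that you formalize the base case as an induction on $n$, whereas the paper obtains the same summation by unrolling the recurrence directly.
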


\begin{proof}
We prove the identity by induction on $s$.
By using Eq.~\eqref{eq:q-binomial_rec} repeatedly,
we have
\[  \qbinom{n}{k}=\sum_{n_1=k}^n q^{(n-n_1)k}\qbinom{n_1-1}{k-1}.   \]
Hence we have
\begin{equation}\label{eq:keylemma_s=1}
    \qbinom{n}{k}\frac{1}{[k]}
= \sum_{n_1=k}^n \frac{q^{(n-n_1)k}}{[n_1]}\qbinom{n_1}{k}
\end{equation}  
and Eq.~\eqref{eq:keylemma_a} holds for $s=1$.

Assume that Eq.~\eqref{eq:keylemma_a} holds for some $s\ge 1$.
Then by inductive assumption, we have
\begin{align*}
         \qbinom{n}{k}\dfrac{1}{[k]^{s+1}}
=\qbinom{n}{k}\dfrac{1}{[k]^{s}} \dfrac{1}{[k]}
=        \displaystyle\sum_{n\ge n_1\ge \cdots \ge n_s\ge k}
        \dfrac{q^{(n-n_s)k}}
              {[n_1]\cdots [n_{s}]} 
        \qbinom{n_{s}}{k}  \frac{1}{[k]}.
 \end{align*}
By using Eq.~\eqref{eq:keylemma_s=1} again,
this equals
\begin{align*} \displaystyle\sum_{n\ge n_1\ge \cdots \ge n_{s+1}\ge k}
        \dfrac{q^{(n-n_{s+1})k}}
              {[n_1]\cdots [n_{s+1}]} 
        \qbinom{n_{s+1}}{k}.
\end{align*}
This shows that Eq.~\eqref{eq:keylemma_a} holds for $s+1$
and this completes the proof.
\end{proof}

The following lemma is a kind of
recurrence relations for
$M_n^{q}(\boldsymbol{s}, \boldsymbol{t}; x,y)$.

\begin{lemma}\label{lemma:induction_lemma}
For any integer $n\ge 1$ and 
index $(s_1,\ldots ,s_d)\in\mathbb{Z}_{>0}^d$,
the following identities hold:
\begin{equation}\label{eq:rec_M_n^q}
\begin{split}
&M_n^{q}(\boldsymbol{s}, \boldsymbol{t}; x,y)
-(x+y)M_{n-1}^{q}(\boldsymbol{s}, \boldsymbol{t}; x,y)\\
& = \begin{cases}
  \dfrac{1}{[n]}\displaystyle \sum_{n\ge n_1\ge \cdots \ge n_{s_1-1}\ge 1} 
        \dfrac{y^{n-n_{s_1-1}}}{[n_1]\cdots [n_{s_1-1}]}
        M_{n_{s_1-1}}((s_2,\ldots ,s_d), (\tfrac{t_2}{t_1},\ldots , \tfrac{t_d}{t_1} ); t_1x,y)& (d\ge 2, s_1\ge 2),\\
  \dfrac{1}{[n]} M_{n}^q((s_2,\ldots ,s_d), (\tfrac{t_2}{t_1},\ldots ,        \tfrac{t_d}{t_1} ); t_1x,y)& (d\ge 2, s_1=1),\\
 \dfrac{1}{[n]}\displaystyle \sum_{n\ge n_1\ge \cdots \ge n_{s_1-1}\ge 1} 
        \dfrac{y^{n-n_{s_1-1}}}{[n_1]\cdots [n_{s_1-1}]}
        \left( (t_1x+y)^{n_{s_1-1}}-y^{n_{s_1-1}} \right)& (d=1, s_1\ge 2),\\
 \dfrac{1}{[n]} \left( (t_1x+y)^{n}-y^{n} \right)& (d=1, s_1=1).
 \end{cases}
\end{split}
\end{equation}
\end{lemma}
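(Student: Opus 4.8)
The plan is to prove \eqref{eq:rec_M_n^q} by first computing the difference $M_n^{q}(\boldsymbol{s},\boldsymbol{t};x,y)-(x+y)M_{n-1}^{q}(\boldsymbol{s},\boldsymbol{t};x,y)$ in a single closed form valid for all four cases, and only then specialize. The two structural facts I would feed in are the $q$-binomial recurrence \eqref{eq:q-binomial_rec} and the commutation rule $yx^{k}=q^{k}x^{k}y$ in $R_q[x,y]$ (an immediate consequence of $yx=qxy$). Substituting $\qbinom{n}{k}=q^{k}\qbinom{n-1}{k}+\qbinom{n-1}{k-1}$ into $M_n^q$, and expanding $(x+y)M_{n-1}^q$ via $(x+y)x^{k}y^{n-1-k}=x^{k+1}y^{n-1-k}+q^{k}x^{k}y^{n-k}$ followed by the shift $k\mapsto k-1$ in the first piece, I expect the terms carrying $q^{k}\qbinom{n-1}{k}$ to cancel exactly. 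This should leave the uniform identity
\[ M_n^{q}(\boldsymbol{s},\boldsymbol{t};x,y)-(x+y)M_{n-1}^{q}(\boldsymbol{s},\boldsymbol{t};x,y)=\sum_{k=1}^{n}\qbinom{n-1}{k-1}x^{k}y^{n-k}\left(l_k^{\star,q}(\boldsymbol{s},\boldsymbol{t})-l_{k-1}^{\star,q}(\boldsymbol{s},\boldsymbol{t})\right), \]
with the convention $l_0^{\star,q}:=0$.

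The second ingredient is a difference formula isolating the top index $n_1=k$ in the defining series. Writing $\boldsymbol{s}'=(s_2,\ldots,s_d)$ and $\boldsymbol{t}'=(\tfrac{t_2}{t_1},\ldots,\tfrac{t_d}{t_1})$, a short telescoping of the $t$-exponents should give
\[ l_k^{\star,q}(\boldsymbol{s},\boldsymbol{t})-l_{k-1}^{\star,q}(\boldsymbol{s},\boldsymbol{t})=\frac{t_1^{k}}{[k]^{s_1}}\, l_k^{\star,q}(\boldsymbol{s}',\boldsymbol{t}'), \]
where the second factor is read as $1$ when $d=1$. Since $t_1$ is central, $t_1^{k}x^{k}=(t_1x)^{k}$, so inserting this into the previous display turns every $x^{k}$ into $(t_1x)^{k}$ and isolates the coefficient $\qbinom{n-1}{k-1}/[k]^{s_1}$.

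For the binomial arithmetic I would use the elementary identity $\qbinom{n-1}{k-1}\tfrac{1}{[k]}=\tfrac{1}{[n]}\qbinom{n}{k}$. When $s_1=1$ this finishes both cases at once: for $d\ge 2$ the right-hand side becomes $\frac{1}{[n]}\sum_{k\ge1}\qbinom{n}{k}(t_1x)^{k}y^{n-k}l_k^{\star,q}(\boldsymbol{s}',\boldsymbol{t}')=\frac{1}{[n]}M_n^{q}(\boldsymbol{s}',\boldsymbol{t}';t_1x,y)$, and for $d=1$ the Sch\"{u}tzenberger identity \eqref{eq:q-binomial_nc} applied to $t_1x,y$ (which satisfy $y(t_1x)=q(t_1x)y$) collapses the sum to $(t_1x+y)^{n}-y^{n}$. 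When $s_1\ge2$ I would rewrite $\qbinom{n-1}{k-1}/[k]^{s_1}=\frac{1}{[n]}\qbinom{n}{k}/[k]^{s_1-1}$ and feed $\qbinom{n}{k}/[k]^{s_1-1}$ into Lemma \ref{lemma:keylemma_a} with $s=s_1-1$, producing the summation $n\ge n_1\ge\cdots\ge n_{s_1-1}\ge k$ and the weight $q^{(n-n_{s_1-1})k}\qbinom{n_{s_1-1}}{k}/([n_1]\cdots[n_{s_1-1}])$.

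The step I expect to be the crux is matching the noncommutative monomials after interchanging the order of summation. Once the sums are swapped, the coefficient of the $k$-term carries a stray factor $q^{(n-n_{s_1-1})k}$ in front of $(t_1x)^{k}y^{n-k}$, and the point is that the commutation rule $y^{a}(t_1x)^{k}=q^{ak}(t_1x)^{k}y^{a}$ with $a=n-n_{s_1-1}$ is exactly what converts this into the split form $y^{\,n-n_{s_1-1}}(t_1x)^{k}y^{\,n_{s_1-1}-k}$. With that rewriting the inner $k$-sum is $\sum_{k=1}^{n_{s_1-1}}\qbinom{n_{s_1-1}}{k}(t_1x)^{k}y^{n_{s_1-1}-k}l_k^{\star,q}(\boldsymbol{s}',\boldsymbol{t}')$, which is precisely $M_{n_{s_1-1}}^{q}(\boldsymbol{s}',\boldsymbol{t}';t_1x,y)$ for $d\ge2$, and $(t_1x+y)^{n_{s_1-1}}-y^{n_{s_1-1}}$ for $d=1$ by \eqref{eq:q-binomial_nc}; restoring the outer $y^{\,n-n_{s_1-1}}/([n_1]\cdots[n_{s_1-1}])$ and the prefactor $1/[n]$ then yields the two remaining cases. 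The care required is entirely in the $q$-powers: the factor $q^{(n-n_{s_1-1})k}$ supplied by Lemma \ref{lemma:keylemma_a} is precisely the one generated when $y^{\,n-n_{s_1-1}}$ is commuted past $(t_1x)^{k}$, so the two agree and the split form assembles cleanly; the rest is bookkeeping.
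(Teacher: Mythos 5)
Your proposal is correct and follows essentially the same route as the paper: the $q$-binomial recurrence \eqref{eq:q-binomial_rec} to peel off $(x+y)M_{n-1}^q$, the identity $\qbinom{n-1}{k-1}\frac{1}{[k]}=\frac{1}{[n]}\qbinom{n}{k}$, Lemma \ref{lemma:keylemma_a} with $s=s_1-1$, and the commutation $y^{a}(t_1x)^{k}=q^{ak}(t_1x)^{k}y^{a}$ to absorb the stray $q$-power — your explicit difference formula $l_k^{\star,q}-l_{k-1}^{\star,q}=\frac{t_1^{k}}{[k]^{s_1}}l_k^{\star,q}(\boldsymbol{s}',\boldsymbol{t}')$ is exactly the splitting (by whether $n_1=k$ or $n_1\le k-1$) that the paper performs inline. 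The only difference is organizational, so nothing further is needed.
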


\begin{rem}
When both $\boldsymbol{s}$ and $\boldsymbol{t}$ are the empty index $\emptyset$,
we may define $l_k^{\star,q}(\emptyset, \emptyset)=1$ for 
any $k\ge 1$.
Then 
$M_n^q(\emptyset, \emptyset; x,y)
= \sum_{k=1}^n \qbinom{n}{k} x^k y^{n-k} \cdot 1
= (x+y)^n-y^n$ and
Eq.~\eqref{eq:rec_M_n^q} above can be written simply as 
\begin{equation*}
\begin{split}
&M_n^{q}(\boldsymbol{s}, \boldsymbol{t}; x,y)
-(x+y)M_{n-1}^{q}(\boldsymbol{s}, \boldsymbol{t}; x,y)\\
& = \begin{cases}
  \dfrac{1}{[n]}\displaystyle \sum_{n\ge n_1\ge \cdots \ge n_{s_1-1}\ge 1} 
        \dfrac{y^{n-n_{s_1-1}}}{[n_1]\cdots [n_{s_1-1}]}
        M_{n_{s_1-1}}^q((s_2,\ldots ,s_d), (\tfrac{t_2}{t_1},\ldots , \tfrac{t_d}{t_1} ); t_1x,y)& (s_1\ge 2),\\
  \dfrac{1}{[n]} M_{n}^q((s_2,\ldots ,s_d), (\tfrac{t_2}{t_1},\ldots ,        \tfrac{t_d}{t_1} ); t_1x,y)& (s_1=1).
  \end{cases}
\end{split}
\end{equation*}
\end{rem}

\begin{proof}[Proof of Lemma \ref{lemma:induction_lemma}]
First, let us consider the case $d\ge 2$.
By Eq.~\eqref{eq:q-binomial_rec}, we have
\begin{align*}
M_n^{q}(\boldsymbol{s}, \boldsymbol{t}; x,y)
&=\sum_{k=1}^n\left( q^k\qbinom{n-1}{k}+\qbinom{n-1}{k-1} \right)
x^ky^{n-k} l_k^{\star,q}(\boldsymbol{s}, \boldsymbol{t})\\
&=\sum_{k=1}^nq^k\qbinom{n-1}{k}
x^ky^{n-k} l_k^{\star,q}(\boldsymbol{s}, \boldsymbol{t})
 + \sum_{k=1}^n\qbinom{n-1}{k-1}
x^ky^{n-k} l_k^{\star,q}(\boldsymbol{s}, \boldsymbol{t})  \\
&= y\sum_{k=1}^{n-1}\qbinom{n-1}{k}x^{k}y^{n-1-k} 
    l_k^{\star,q}(\boldsymbol{s}, \boldsymbol{t}) 
  +x \sum_{k=2}^n\qbinom{n-1}{k-1}x^{k-1}y^{n-1-(k-1)} 
    l_{k-1}^{\star,q}(\boldsymbol{s}, \boldsymbol{t}) \\
& \hspace{10pt} 
  +\sum_{k=1}^n\qbinom{n-1}{k-1}x^{k}y^{n-k} 
       \frac{1}{[k]^{s_1}} 
       \sum_{k\ge n_2 \ge \cdots \ge n_d \ge 1}
          \frac{ t_1^{k-n_2} t_2^{n_2-n_3} \cdots t_{d-1}^{n_{d-1}-n_d} t_d^{n_d} } 
               {[n_2]^{s_2} \cdots [n_d]^{s_d}}\\
&=(x+y) \sum_{k=1}^{n-1}\qbinom{n-1}{k}x^{k}y^{n-1-k} 
    l_k^{\star,q}(\boldsymbol{s}, \boldsymbol{t}) \\
&\hspace{10pt} + \sum_{k=1}^n\qbinom{n-1}{k-1}\frac{1}{[k]^{s_1}}
(t_1x)^{k}y^{n-k}\sum_{k\ge n_2 \ge \cdots \ge n_d \ge 1}
          \frac{ (\tfrac{t_2}{t_1})^{n_2-n_3} \cdots (\tfrac{t_{d-1}}{t_1})^{n_{d-1}-n_d} 
          (\tfrac{t_d}{t_1})^{n_d}  } 
               {[n_2]^{s_2} \cdots [n_d]^{s_d}}.
\end{align*}
Hence we obtain that
\begin{equation}\label{eq:M_n-M_n-1}
\begin{split}
    &M_n^{q}(\boldsymbol{s}, \boldsymbol{t}; x,y)
-(x+y)M_{n-1}^{q}(\boldsymbol{s}, \boldsymbol{t}; x,y)\\
&= \frac{1}{[n]}
 \sum_{k=1}^n\qbinom{n}{k}\frac{1}{[k]^{s_1-1}}
 (t_1x)^{k}y^{n-k}        
        l_k^{\star,q} \left( (s_2,\ldots ,s_d),  (\tfrac{t_2}{t_1},  \ldots,  \tfrac{t_d}{t_1}) \right).
\end{split}
\end{equation}

When $s_1\ge 2$, 
by Lemma \ref{lemma:keylemma_a}, 
the right-hand side of \eqref{eq:M_n-M_n-1} equals
\begin{align*}
&\frac{1}{[n]} \sum_{k=1}^n\sum_{n\ge n_1\ge \cdots n_{s_1-1}
\ge k}
\frac{q^{(n-n_{s_1-1})k}}{[n_1]\cdots [n_{s_1-1}]}
\qbinom{n_{s_1-1}}{k} (t_1x)^k y^{n-k} 
l_k^{\star,q}
       \left( (s_2,\ldots ,s_d),  (\tfrac{t_2}{t_1},  \ldots,  \tfrac{t_d}{t_1}) \right)\\
&=\frac{1}{[n]} \sum_{k=1}^n \sum_{n\ge n_1\ge \cdots n_{s_1-1}
\ge k}
\frac{y^{n-n_{s_1-1}}}{[n_1]\cdots [n_{s_1-1}]} 
\qbinom{n_{s_1-1}}{k} (t_1x)^k y^{n_{s_1-1}-k} 
l_k^{\star,q}
       \left( (s_2,\ldots ,s_d),  (\tfrac{t_2}{t_1},  \ldots,  \tfrac{t_d}{t_1}) \right)\\
&=\frac{1}{[n]} \sum_{n\ge n_1\ge \cdots n_{s_1-1}
\ge 1}
\frac{y^{n-n_{s_1-1}}}{[n_1]\cdots [n_{s_1-1}]} 
\sum_{k=1}^{n_{s_1-1}} \qbinom{n_{s_1-1}}{k} (t_1x)^k y^{n_{s_1-1}-k} 
l_k^{\star,q}
       \left( (s_2,\ldots ,s_d),  (\tfrac{t_2}{t_1},  \ldots,  \tfrac{t_d}{t_1}) \right)\\
&=\frac{1}{[n]} \sum_{n\ge n_1\ge \cdots n_{s_1-1}
\ge 1}
\frac{y^{n-n_{s_1-1}}}{[n_1]\cdots [n_{s_1-1}]} 
M_{n_{s_1-1}}^q
       \left( (s_2,\ldots ,s_d),  (\tfrac{t_2}{t_1},  \ldots,  \tfrac{t_d}{t_1}); t_1x, y \right).
\end{align*}
When $s_1=1$, 
the right-hand side of \eqref{eq:M_n-M_n-1} equals
\begin{align*}
\frac{1}{[n]} M_n^q \left( (s_2,\ldots ,s_d),  (\tfrac{t_2}{t_1},  \ldots,  \tfrac{t_d}{t_1}) ; t_1x, y \right).
\end{align*}
Therefore Lemma \ref{lemma:induction_lemma} is proved in the case $d\ge 2$.
\vspace{10pt}

Next we consider the case $d=1$.
By the similar calculation in the case $d\ge 2$, we have 
\begin{align} \label{eq:M_n-M_n-1_2}
M_n^q(\boldsymbol{s}, \boldsymbol{t})
-(x+y) M_{n-1}^q(\boldsymbol{s}, \boldsymbol{t})
= \frac{1}{[n]}\sum_{k=1}^n \qbinom{n}{k} \frac{1}{[k]^{s_1-1}}
(t_1x)^k y^{n-k}.
\end{align}

When $s_1\ge 2$, by Lemma \ref{lemma:keylemma_a}
and Eq.~\eqref{eq:q-binomial_nc},
the right-hand side of \eqref{eq:M_n-M_n-1_2}
equals 
\begin{align*}
&\frac{1}{[n]} \sum_{k=1}^n 
\sum_{n\ge n_1 \ge \cdots \ge n_{s_1-1}\ge k}
\frac{q^{(n-n_{s_1-1})k}}{[n_1]\cdots [n_{s_1-1}]}
\qbinom{n_{s_1-1}}{k} (t_1x)^k y^{n-k}\\
&=\frac{1}{[n]} 
\sum_{n\ge n_1 \ge \cdots \ge n_{s_1-1}\ge 1}
\frac{ y^{n-n_{s_1-1}} }{[n_1]\cdots [n_{s_1-1}]}
\sum_{k=1}^{n_{s_1-1}} 
\qbinom{n_{s_1-1}}{k} (t_1x)^k y^{n_{s_1-1}-k}\\
&=
\frac{1}{[n]} 
\sum_{n\ge n_1 \ge \cdots \ge n_{s_1-1}\ge 1}
\dfrac{y^{n-n_{s_1-1}}
((t_1x+y)^{n_{s_1-1}} -y^{n_{s_1-1}} )}
{[n_1]\cdots [n_{s_1-1}]}.
\end{align*}

When $s_1=1$, the right-hand side of \eqref{eq:M_n-M_n-1_2}
equals 
\[ \frac{1}{[n]} ( (t_1x+y)^n - y^n  ).\]
Consequently, Lemma \ref{lemma:induction_lemma} is also proved in the case $d=1$.
\end{proof}
\vspace{10pt}

\begin{proof}[Proof of Theorem \ref{thm:mainthm}]

We show the theorem by the following way:
\begin{enumerate}
\item Prove the theorem in the case $n=1$.
\item Prove the theorem in the case $d$ $(=\text{dep\,}\boldsymbol{s})=1$.
\item Assume that the theorem holds for all 
$M_{n'}(\boldsymbol{s}',\boldsymbol{t}'; x,y)$
with 
\begin{itemize}
\item $n'\le n$ and \text{dep\,}$\boldsymbol{s}'< \text{dep\,}\boldsymbol{s}$,
\item $n'< n$ and \text{dep\,}$\boldsymbol{s}' \le \text{dep\,}\boldsymbol{s}$.
\end{itemize}
Then prove the theorem holds for 
$M_{n}(\boldsymbol{s},\boldsymbol{t}; x,y)$.
\end{enumerate}

\begin{enumerate}

\item The case $n=1$ is clear. In fact, 
both sides of \eqref{eq:mainthm} are $t_dx$ when $n=1$.

\item 
Eq.~\eqref{eq:mainthm} for $d=1$ is 
\begin{align}\label{eq:pfmainthm_d=1}
M_n^q(s,t; x,y)
=\sum_{n\ge n_1\ge \cdots \ge n_s \ge 1}
    \frac{ (x+y)^{n-n_1} y^{n_1-n_s} \{ (tx+y)^{n_s} -y^{n_s}  \} }  {[n_1]\cdots [n_s]}\ \ (n\ge 1).
\end{align}

We prove this equation by induction on $n$.
The case $n=1$ has been proved in (i).

Assume that Eq.~\eqref{eq:pfmainthm_d=1} holds for $n-1$.
For $s\ge 2$, by Lemma \ref{lemma:induction_lemma},
we have
\begin{align*}
& M_n^q(s,t; x,y)\\
&=(x+y)M_{n-1}^q(s,t; x,y)
+\frac{1}{[n]} \sum_{n\ge m_1\ge \cdots \ge m_{s-1}\ge 1}
 \frac{y^{n-m_{s-1}}}{[m_1]\cdots [m_{s-1}]}
   ((tx+y)^{m_{s-1}}-y^{m_{s-1}}).
\end{align*}
By inductive assumption, this equals
\begin{align*}
&(x+y)\sum_{n-1\ge n_1\ge \cdots \ge n_{s-1}\ge 1}
  \frac{(x+y)^{n-1-n_1} y^{n_1-n_s}   ((tx+y)^{n_s}-y^{n_s})}{[n_1]\cdots [n_s]}\\
&\hspace{10pt}+\frac{1}{[n]} \sum_{n\ge m_1\ge \cdots \ge m_{s-1}\ge 1}
 \frac{y^{n-m_{s-1}}}{[m_1]\cdots [m_{s-1}]}
   ((tx+y)^{m_{s-1}}-y^{m_{s-1}})\\
&=\sum_{n\ge n_1\ge \cdots \ge n_{s-1}\ge 1}
  \frac{(x+y)^{n-n_1} y^{n_1-n_s}   ((tx+y)^{n_s}-y^{n_s})}{[n_1]\cdots [n_s]}.
\end{align*}
Hence Eq.~\eqref{eq:pfmainthm_d=1} holds for $n$.
It can be proved similarly in the case $s=1$.

\item 
Let $l'(0):=0$, $l'(i):=s_2+\cdots +s_{i+1}$ ($1\le i \le d-1$) and $w':=l'(d-1)=s_2+\cdots +s_d$.
By Lemma \ref{lemma:induction_lemma} and inductive asummption, we have 
\begin{align*}
&M_n^q(\boldsymbol{s},\boldsymbol{t}; x,y)\\
&=(x+y)M_{n-1}^q(\boldsymbol{s},\boldsymbol{t}; x,y)\\
&+
  \dfrac{1}{[n]}\displaystyle \sum_{n\ge n_1\ge \cdots \ge n_{s_1-1}\ge 1} 
        \dfrac{y^{n-n_{s_1-1}}}{[n_1]\cdots [n_{s_1-1}]}
        M^q_{n_{s_1-1}}((s_2,\ldots ,s_d), (\tfrac{t_2}{t_1},\ldots , \tfrac{t_d}{t_1} ); t_1x,y)\\
&= \sum_{n-1\ge n_1\ge \cdots \ge n_w \ge 1}
 \frac{(x+y)^{n-n_1}}{ [n_1]\cdots [n_w]}
  \left( \prod_{r=1}^{d-1} y^{n_{l(r-1)+1}-n_{l(r)}}(t_rx+y)^{n_{l(r)}-n_{l(r)+1}} \right)\\[10pt]
&\hspace{20pt}  \times y^{n_{l(d-1)+1}-n_{l(d)}}
  \left( (t_dx+y)^{n_{l(d)}} - y^{n_{l(d)}} \right)\\
&+
  \dfrac{1}{[n]}\displaystyle \sum_{n\ge n_1\ge \cdots \ge n_{s_1-1}\ge 1} 
        \dfrac{y^{n-n_{s_1-1}}}{[n_1]\cdots [n_{s_1-1}]}
\sum_{n_{s_1-1} \ge m_1\ge \cdots \ge m_{w'}\ge 1}\\
&\hspace{20pt} \times \frac{(t_1x+y)^{n_{s_1-1}-m_1}}{ [m_1]\cdots [m_{w'}]}
  \left( \prod_{r=1}^{d-2} y^{m_{l'(r-1)+1}-m_{l'(r)}}\left(\tfrac{t_{r+1}}{t_1}t_1x+y\right)^{m_{l'(r)}-m_{l'(r)+1}} \right) \\
&\hspace{20pt} \times y^{m_{l'(d-2)+1}-m_{l'(d-1)}}
  \left( \left(\tfrac{t_{d}}{t_1} t_1x +y\right)^{m_{l'(d-1)}} - y^{m_{l'(d-1)}}   \right).
\end{align*}

The second term coincides with the first term for ``$n_1=n$''.
Hence we obtain that 
\begin{align*}
&M_n^q(\boldsymbol{s},\boldsymbol{t}; x,y)\\
&= \sum_{n\ge n_1\ge \cdots \ge n_w \ge 1}
 \frac{(x+y)^{n-n_1}}{ [n_1]\cdots [n_w]}
  \left( \prod_{r=1}^{d-1} y^{n_{l(r-1)+1}-n_{l(r)}}(t_rx+y)^{n_{l(r)}-n_{l(r)+1}} \right)\\
&\hspace{20pt}  \times y^{n_{l(d-1)+1}-n_{l(d)}}
  \left( (t_dx+y)^{n_{l(d)}} - y^{n_{l(d)}}   \right)
  \end{align*}
and this proves that the theorem holds for $M_n^q(\boldsymbol{s},\boldsymbol{t};x,y)$.
\end{enumerate}
\end{proof}

%%%%%%%%%%%%%%%%%%%%%%
%%%%%%%%%%%%%%%%%%%%%%
\section{Sakugawa-Seki identities}\label{sec:Sakugawa-Seki}
%%%%%%%%%%%%%%%%%%%%%%
%%%%%%%%%%%%%%%%%%%%%%

In this section, we discuss the connection between
our results and the Sakugawa-Seki identities proved in \cite{SS}.
The followings are the Sakugawa-Seki identities,
which generalize the classical Euler's identity:
\[ \sum_{k=1}^n \binom{n}{k} \frac{(-1)^{k-1}}{k} = \sum_{k=1}^n \frac{1}{k}
\ \ (n\ge 1). \]

\begin{thm}[{\cite[Theorem~2.5]{SS}}] \label{thm:SS}
For $(s_1,\ldots, s_d) \in \mathbb{Z}_{>0}^d$ and
$n\ge 1$, the following identities hold:
\begin{equation}\label{eq:SS_1}
\begin{split}
& \sum_{n\ge n_1\ge \cdots \ge n_d\ge 1}
  (-1)^{n_1}\binom{n}{n_1} 
   \dfrac{t_1^{n_1-n_2} \cdots t_{d-1}^{n_{d-1}-n_d} t_d^{n_d}}
         {n_1^{s_1}\cdots n_d^{s_d}}\\
&=  \sum_{n\ge n_1\ge \cdots \ge n_w\ge 1}
    \dfrac{(1-t_1)^{n_{l(1)}-n_{l(1)+1}} \cdots 
           (1-t_{d-1})^{n_{l(d-1)}-n_{l(d-1)+1}}
            \{ (1-t_d)^{n_{l(d)}} -1 \} }
         {n_1\cdots n_w},
\end{split}
\end{equation}

\begin{equation}\label{eq:SS_2}
\begin{split}
& \sum_{n\ge n_1\ge \cdots \ge n_d\ge 1} 
   \dfrac{t_1^{n_1-n_2} \cdots t_{d-1}^{n_{d-1}-n_d} t_d^{n_d}}
         {n_1^{s_1}\cdots n_d^{s_d}}\\
&=  \sum_{n\ge n_1\ge \cdots \ge n_w\ge 1}
     (-1)^{n_1}\binom{n}{n_1}
     \dfrac{(1-t_1)^{n_{l(1)}-n_{l(1)+1}} \cdots 
           (1-t_{d-1})^{n_{l(d-1)}-n_{l(d-1)+1}}
            \{ (1-t_d)^{n_{l(d)}} -1 \} }
         {n_1\cdots n_w}.
\end{split}
\end{equation}
\end{thm}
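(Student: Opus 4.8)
The plan is to deduce both identities of Theorem~\ref{thm:SS} from Corollary~\ref{cor:maincor} by specializing the polynomial identity \eqref{eq:maincor} at $x=-1$, $y=1$. The conceptual point is that $M_n(\boldsymbol{s},\boldsymbol{t};x,y)$ is, by definition, a binomial (Euler) transform of $l_k^{\star}(\boldsymbol{s},\boldsymbol{t})$ in the \emph{cut-off} variable $k$, whereas the Sakugawa--Seki identities instead attach the weight $(-1)^{n_1}\binom{n}{n_1}$ to the \emph{top} summation index $n_1$. The bridge between the two is the elementary partial-sum identity
\[
\sum_{k=m}^{n}(-1)^{k}\binom{n}{k}=(-1)^{m}\binom{n-1}{m-1},
\qquad n\binom{n-1}{m-1}=m\binom{n}{m},
\]
which converts a weight on the cut-off into a weight on the top index at the cost of one unit in $s_1$ and one factor of $n$.

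First I would prove \eqref{eq:SS_1}. Apply Corollary~\ref{cor:maincor} to the shifted index $\boldsymbol{s}^{+}:=(s_1+1,s_2,\dots,s_d)$ and set $x=-1$, $y=1$. On the right-hand side $x+y=0$, so $(x+y)^{n-n_1}=0^{\,n-n_1}$ forces $n_1=n$; since $t_rx+y=1-t_r$ and $y=1$, multiplying by $n$ and shifting the surviving indices $n_i\mapsto n_{i-1}$ turns the collapsed $w{+}1$-fold sum into exactly the right-hand side of \eqref{eq:SS_1} for the index $\boldsymbol{s}$. On the left-hand side I would interchange the order of summation in $M_n(\boldsymbol{s}^{+},\boldsymbol{t};-1,1)=\sum_{k}(-1)^{k}\binom{n}{k}\,l_k^{\star}(\boldsymbol{s}^{+},\boldsymbol{t})$ and apply the partial-sum identity above; the bump $s_1\mapsto s_1+1$ together with the factor $n$ converts $\binom{n-1}{n_1-1}/n_1^{\,s_1+1}$ into $\binom{n}{n_1}/n_1^{\,s_1}$, producing precisely the left-hand side of \eqref{eq:SS_1}.

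To obtain \eqref{eq:SS_2} I would show it is \emph{equivalent} to \eqref{eq:SS_1}. Write $\mathcal{P}(n):=l_n^{\star}(\boldsymbol{s},\boldsymbol{t})$ and let $\mathcal{Q}(n)$ denote the right-hand side of \eqref{eq:SS_1}, with $\mathcal{P}(0)=\mathcal{Q}(0)=0$. Introduce the operator $\Phi$ that replaces, in a nested sum whose outermost layer reads $\sum_{n\ge n_1\ge\cdots}(\cdots)/n_1^{a}$, the constraint $n\ge n_1$ by the weight $(-1)^{n_1}\binom{n}{n_1}$ while keeping $1/n_1^{a}$; equivalently $\Phi(S)(n)=\sum_{m=1}^{n}(-1)^{m}\binom{n}{m}\bigl(S(m)-S(m-1)\bigr)$, using the sequence's own differences. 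With this notation \eqref{eq:SS_1} reads exactly $\Phi(\mathcal{P})=\mathcal{Q}$ and \eqref{eq:SS_2} reads exactly $\mathcal{P}=\Phi(\mathcal{Q})$. A short computation gives $\Phi(\Phi(S))(n)=\sum_{j=1}^{n}\bigl(S(j)-S(j-1)\bigr)\sum_{m=j}^{n}(-1)^{m-j}\binom{n}{m}\binom{m-1}{j-1}$, and the inner sum equals $1$ for every $1\le j\le n$; hence $\Phi$ is an involution. Since \eqref{eq:SS_1} holds for all $n$, applying $\Phi$ to both sides yields $\mathcal{P}=\Phi(\mathcal{Q})$, which is \eqref{eq:SS_2}.

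The two specializations are routine once the substitution $x=-1$, $y=1$ is made; the steps requiring care are (i) the reindexing that matches the collapsed right-hand side of \eqref{eq:maincor} with the right-hand side of \eqref{eq:SS_1}, tracking the $y$-exponents (which all vanish as $y=1$) and the denominators $n_1\cdots n_w$, and (ii) the combinatorial identity $\sum_{m=j}^{n}(-1)^{m-j}\binom{n}{m}\binom{m-1}{j-1}=1$ underlying the involution property of $\Phi$. I expect (ii)---verifying that $\Phi$ is genuinely an involution---to be the main obstacle, as it is precisely what makes the two superficially different identities \eqref{eq:SS_1} and \eqref{eq:SS_2} equivalent.
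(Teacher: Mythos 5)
Your proposal is correct, and it takes a genuinely different route from the paper's. The paper proves \eqref{eq:SS_2} first: it sets $x=-1$, $y=1$ in \eqref{eq:maincor} for the index $\boldsymbol{s}$ itself, so that $(x+y)^{n-n_1}$ kills every term with $n_1<n$, and then applies binomial inversion; it then gets \eqref{eq:SS_1} by a separate device, rewriting the right-hand side of \eqref{eq:SS_1} as $l_n^{\star}\bigl((1,\ldots,1),\boldsymbol{u}\bigr)-l_n^{\star}\bigl((1,\ldots,1),\boldsymbol{v}\bigr)$ for tuples $\boldsymbol{u},\boldsymbol{v}$ built from the $1-t_j$, applying \eqref{eq:maincor} to those all-ones indices, and inverting again. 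You do the opposite: \eqref{eq:SS_1} comes first and directly, from \eqref{eq:maincor} at the bumped index $(s_1+1,s_2,\ldots,s_d)$, with the cut-off weight converted into the top-index weight through $\sum_{k=n_1}^{n}(-1)^k\binom{n}{k}=(-1)^{n_1}\binom{n-1}{n_1-1}$ and $n\binom{n-1}{n_1-1}=n_1\binom{n}{n_1}$; I checked that the collapsed $(w+1)$-fold sum on the right, multiplied by $n$ and reindexed by $n_i\mapsto n_{i-1}$, is exactly the right-hand side of \eqref{eq:SS_1}, so this leg is sound. Then \eqref{eq:SS_2} follows from \eqref{eq:SS_1} via your involution $\Phi$, which makes explicit something the paper never states: the two Sakugawa--Seki identities are images of each other under a single transform. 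What the paper's route buys is economy of lemmas: it needs only textbook binomial inversion, applied twice. Your route trades the paper's (rather unobvious) $\boldsymbol{u},\boldsymbol{v}$ construction for the more mechanical index bump, at the price of the identity $\sum_{m=j}^{n}(-1)^{m-j}\binom{n}{m}\binom{m-1}{j-1}=1$, which you correctly flag as the one real obstacle but leave unproved. It is true, and the gap closes in two lines: writing $f(n,j)$ for that sum, Pascal's rule gives $f(n,j)=f(n-1,j)+\sum_{m=j}^{n}(-1)^{m-j}\binom{n-1}{m-1}\binom{m-1}{j-1}$, and by trinomial revision $\binom{n-1}{m-1}\binom{m-1}{j-1}=\binom{n-1}{j-1}\binom{n-j}{m-j}$, so the second term equals $\binom{n-1}{j-1}\sum_{i\ge 0}(-1)^i\binom{n-j}{i}=\delta_{n,j}$; hence $f(n,j)=f(n-1,j)$ for $n>j$ and $f(j,j)=1$, giving $f\equiv 1$. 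With that lemma supplied, both halves of your argument are complete.
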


\begin{rem}
 As described in \cite[Remark 2.9]{SS},
 these equations can be derived from 
 Kawashima-Tanaka's formula \cite[Theorem 2.6]{KT}.
\end{rem}

These identities are equivalent to
our identity \eqref{eq:maincor} in Corollary \ref{cor:maincor}, that is,
the following theorem holds.
\begin{thm}
From Eq.~\eqref{eq:maincor},
we can derive Eqs.~\eqref{eq:SS_1} and \eqref{eq:SS_2}, and vice versa.
\end{thm}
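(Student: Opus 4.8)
The plan is to collapse the two-variable identity \eqref{eq:maincor} onto the Sakugawa--Seki identities by the single specialization $x=-1$, $y=1$, keeping $t_1,\dots,t_d$ as free variables in $R$. Under this substitution one has $t_rx+y=1-t_r$ and, crucially, $x+y=0$, so the factor $(x+y)^{n-n_1}$ in \eqref{eq:maincor} vanishes unless $n_1=n$. The right-hand side therefore collapses to its ``top slice'' $n_1=n$, which is exactly
\[
\sum_{\substack{n_1=n\\ n\ge n_2\ge\cdots\ge n_w\ge1}}
\frac{\prod_{r=1}^{d-1}(1-t_r)^{n_{l(r)}-n_{l(r)+1}}\bigl\{(1-t_d)^{n_{l(d)}}-1\bigr\}}{n_1\cdots n_w}
= G(n)-G(n-1),
\]
where $G(N)$ denotes the right-hand side of \eqref{eq:SS_1} with the outer bound $n$ replaced by $N$ (so $G(0)=0$). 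On the left, $M_n(\boldsymbol{s},\boldsymbol{t};-1,1)=\sum_{k=1}^n(-1)^k\binom{n}{k}l_k^{\star}(\boldsymbol{s},\boldsymbol{t})$.

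First I would record the arithmetic lemma $\sum_{k=m}^{n}(-1)^k\binom{n}{k}=(-1)^{m}\binom{n-1}{m-1}$ (equivalently $\sum_{k=0}^{j}(-1)^k\binom{n}{k}=(-1)^j\binom{n-1}{j}$ together with $\sum_{k=0}^n(-1)^k\binom{n}{k}=0$). Expanding $l_k^{\star}$ and interchanging the order of summation, the inner sum over $k$ runs from $m_1$ to $n$ and is evaluated by this lemma, giving
\[
M_n(\boldsymbol{s},\boldsymbol{t};-1,1)=\sum_{n\ge m_1\ge\cdots\ge m_d\ge1}(-1)^{m_1}\binom{n-1}{m_1-1}\frac{t_1^{m_1-m_2}\cdots t_d^{m_d}}{m_1^{s_1}\cdots m_d^{s_d}}.
\]
By Pascal's rule $\binom{n}{m_1}-\binom{n-1}{m_1}=\binom{n-1}{m_1-1}$, the right-hand side equals $F(n)-F(n-1)$, where $F(N)$ is the left-hand side of \eqref{eq:SS_1} with bound $N$. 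Comparing the two expressions for $M_n(\boldsymbol{s},\boldsymbol{t};-1,1)$ gives $F(n)-F(n-1)=G(n)-G(n-1)$ for all $n\ge1$; since $F(0)=G(0)=0$, telescoping yields $F=G$, which is precisely \eqref{eq:SS_1}.

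To obtain \eqref{eq:SS_2} I would invoke the involutivity of the alternating binomial transform: from $M_n(\boldsymbol{s},\boldsymbol{t};-1,1)=\sum_{k=0}^n(-1)^k\binom{n}{k}l_k^{\star}(\boldsymbol{s},\boldsymbol{t})$ (with $l_0^{\star}:=0$) inversion gives $l_n^{\star}(\boldsymbol{s},\boldsymbol{t})=\sum_{k=0}^n(-1)^k\binom{n}{k}M_k(\boldsymbol{s},\boldsymbol{t};-1,1)$. Substituting the collapsed value $M_k(\boldsymbol{s},\boldsymbol{t};-1,1)=G(k)-G(k-1)$, i.e.\ the slice of $G$ with top index $m_1=k$, and absorbing $(-1)^k\binom{n}{k}=(-1)^{m_1}\binom{n}{m_1}$ into the multiple sum, the double sum reassembles into a single nested sum whose top index $m_1$ carries the weight $(-1)^{m_1}\binom{n}{m_1}$; this is exactly the right-hand side of \eqref{eq:SS_2}, while the left-hand side is $l_n^{\star}(\boldsymbol{s},\boldsymbol{t})$. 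Conversely, assuming \eqref{eq:SS_1} and \eqref{eq:SS_2}, each step is reversible: differencing $F=G$ recovers the right-hand side of \eqref{eq:maincor} at $x=-1$, $y=1$, binomial inversion of \eqref{eq:SS_2} recovers $\sum_k(-1)^k\binom{n}{k}l_k^{\star}=M_n(\boldsymbol{s},\boldsymbol{t};-1,1)$, and the arithmetic lemma undoes the interchange of summation; this yields \eqref{eq:maincor} evaluated at $x=-1$, $y=1$ (with $t_1,\dots,t_d$ free), which is the form in which the two sets of identities are being compared.

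The main obstacle I anticipate is purely bookkeeping: correctly identifying the collapsed right-hand side with the increment $G(n)-G(n-1)$, and, in the \eqref{eq:SS_2} step, tracking the nested index ranges through the binomial inversion so that the combinatorial weight attaches to the \emph{top} index $m_1$ with the correct binomial $\binom{n}{m_1}$ rather than $\binom{n-1}{m_1-1}$. Keeping the depth-$d$ data $\boldsymbol{s}$ and the depth-$w$ indices $n_1\ge\cdots\ge n_w$ aligned through the $l(r)$-notation on both sides is where sign and range errors are most likely, so I would verify the whole passage first in the depth-one case $d=s_1=1$ before writing the general argument.
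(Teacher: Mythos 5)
Your forward direction is correct, and it takes a genuinely different (and arguably cleaner) route than the paper: the paper proves \eqref{eq:SS_2} first by binomial inversion and then obtains \eqref{eq:SS_1} only through a second inversion applied to the auxiliary weight-$w$ indices $\boldsymbol{u}$, $\boldsymbol{v}$, whereas you get \eqref{eq:SS_1} directly by telescoping, using $\sum_{k=m}^{n}(-1)^k\binom{n}{k}=(-1)^m\binom{n-1}{m-1}$ and Pascal's rule to identify both sides of the specialized identity with the first differences $F(n)-F(n-1)$ and $G(n)-G(n-1)$, and then deduce \eqref{eq:SS_2} by a single inversion. Those steps all check out.

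The reverse direction, however, has a genuine gap. The theorem asserts that \eqref{eq:SS_1} and \eqref{eq:SS_2} imply the full two-variable identity \eqref{eq:maincor} in $R[x,y]$, but your argument only recovers \eqref{eq:maincor} \emph{evaluated at} $x=-1$, $y=1$; your closing claim that this specialization is ``the form in which the two sets of identities are being compared'' is not what the statement says. A polynomial identity in $x$ and $y$ is not determined by its value at the single point $(-1,1)$, so ``each step is reversible'' cannot suffice: reversing your steps reconstructs only the one-point specialization you started from. The real content of the converse is precisely the passage from that specialization back to two free variables, and this is where the paper does work that has no counterpart in your proposal: it substitutes \eqref{eq:SS_2} for $l_k^{\star}$ inside $M_n(\boldsymbol{s},\boldsymbol{t};x,y)$ with $x,y$ generic, interchanges summations and evaluates
\[
\sum_{k=n_1}^{n}\binom{n}{k}\binom{k}{n_1}x^k y^{n-k}=\binom{n}{n_1}x^{n_1}(x+y)^{n-n_1},
\]
and then --- crucially exploiting that $t_1,\ldots,t_d$ are free variables --- applies \eqref{eq:SS_1} with index $(1,\ldots,1)$ and with the $t$-arguments replaced by the quantities $Q_i\in\bigl\{\,y/(x+y),\ (t_jx+y)/(x+y)\,\bigr\}$ built from $x/(x+y)$. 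It is this substitution in the $t$-slots that converts the information carried by \eqref{eq:SS_1} and \eqref{eq:SS_2} (equivalent to the $(x,y)=(-1,1)$ specialization) into the two-variable identity \eqref{eq:maincor}. Without some step of this kind, your converse proves strictly less than the theorem states.
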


\begin{proof}
First we show that 
Eq.~\eqref{eq:maincor} implies
Eqs.~\eqref{eq:SS_1} and \eqref{eq:SS_2}.

For two sequences 
$\{a_n\}_{n\ge 0}$ and $\{b_n\}_{n\ge 0}$, 
the following statement
is well known as the binomial inversion:
\[ b_n=\sum_{k=0}^n\binom{n}{k}a_k
\text{\ \ if and only if\ \ } 
a_n=\sum_{k=0}^n\binom{n}{k}(-1)^{n-k}b_k.\]

By applying $x=-1$ and $y=1$ in Eq.~\eqref{eq:maincor},
we have
\begin{align*}
&\sum_{k=1}^n\binom{n}{k} (-1)^k 
  l_k^{\star}(\boldsymbol{s},\boldsymbol{t})\\
& =\sum_{n\ge n_2\ge \cdots \ge n_w\ge 1}
     \frac{(1-t_1)^{n_{l(1)}-n_{l(1)+1}} \cdots 
     (1-t_{d-1})^{n_{l(d-1)}-n_{l(d-1)+1}} 
     \{   (1-t_d)^{n_{l(d)}}-1 \}}
     {nn_2\cdots n_w}.
\end{align*}
By the binomial inversion, we obtain
\begin{align*}
l_n^{\star}(\boldsymbol{s},\boldsymbol{t})
&= \sum_{n_1=1}^n \binom{n}{n_1}
 \sum_{n_1\ge n_2\ge \cdots \ge n_w\ge 1}
 \frac{(-1)^{n_1}}{n_1n_2\cdots n_w}\\
&\hspace{20pt}\times (1-t_1)^{n_{l(1)}-n_{l(1)+1}} \cdots (1-t_{d-1})^{n_{l(d-1)}-n_{l(d-1)+1}}
  \{  (1-t_d)^{n_{l(d)}} -1\}
\end{align*}
and this proves Eq.~\eqref{eq:SS_2}.

For an index $\boldsymbol{s}=(s_1,\ldots, s_d)$,
define two indices $\boldsymbol{u}$ and $\boldsymbol{v}$ as
\begin{align*}
&\boldsymbol{u}:=
(\overbrace{1,\ldots ,1, 1-t_1}^{s_1},
\ldots, 
\overbrace{1,\ldots ,1, 1-t_{d-1}}^{s_{d-1}},
\overbrace{1,\ldots ,1, 1-t_d}^{s_d}), \\
&\boldsymbol{v}:=
(\overbrace{1,\ldots ,1, 1-t_1}^{s_1},
\ldots, 
\overbrace{1,\ldots ,1, 1-t_{d-1}}^{s_{d-1}},
\overbrace{1,\ldots ,1}^{s_d}).
\end{align*}
Then the right-hand side of \eqref{eq:SS_1}
is 
\[  l_n^{\star}\Big( (\overbrace{1,\ldots,1}^w), \boldsymbol{u} \Big) 
-  l_n^{\star} \Big( (\overbrace{1,\ldots,1}^w), \boldsymbol{v}\Big).
\]
By applying $x=-1$ and $y=1$ in Eq.~\eqref{eq:maincor}
for $l_n^{\star}((1,\ldots, 1), \boldsymbol{u})$
and $l_n^{\star}((1,\ldots, 1), \boldsymbol{v})$,
we have
\begin{align*}
\sum_{k=1}^n\binom{n}{k}(-1)^k
\Big( l_k^{\star}((1,\ldots, 1), \boldsymbol{u})
 - l_k^{\star}((1,\ldots, 1), \boldsymbol{v})\Big)
 =\sum_{n\ge n_2\ge \cdots \ge n_d\ge 1}
\frac{t_1^{n-n_2}  \cdots 
t_{d-1}^{n_{d-1}-n_d} 
t_d^{n_d}
}{n^{s_1} n_2^{s_2} \cdots n_d^{s_d}}.
\end{align*}
By using the binomial inversion again, we obtain Eq.~\eqref{eq:SS_1}.
\vspace{10pt} 

Next we show that
Eqs.~\eqref{eq:SS_1} and \eqref{eq:SS_2} implies 
Eq.~\eqref{eq:maincor}.
By Eq.~\eqref{eq:SS_2}, we have
\begin{align*}
M_n(\boldsymbol{s}, \boldsymbol{t}; x,y)
&=\sum_{k=1}^n \binom{n}{k}x^ky^{n-k}
 \sum_{k\ge n_1\ge \cdots \ge n_w\ge 1}
     (-1)^{n_1}\binom{k}{n_1}\\
&\ \ \  \dfrac{(1-t_1)^{n_{l(1)}-n_{l(1)+1}} \cdots 
           (1-t_{d-1})^{n_{l(d-1)}-n_{l(d-1)+1}}
            \{ (1-t_d)^{n_{l(d)}} -1 \} }
         {n_1\cdots n_w}\\
&=\sum_{n\ge n_1\ge \cdots \ge n_w\ge 1} (-1)^{n_1}
    \sum_{k=n_1}^n \binom{n}{k}\binom{k}{n_1} x^ky^{n-k} \\
&\ \ \  \dfrac{(1-t_1)^{n_{l(1)}-n_{l(1)+1}} \cdots 
           (1-t_{d-1})^{n_{l(d-1)}-n_{l(d-1)+1}}
            \{ (1-t_d)^{n_{l(d)}} -1 \} }
         {n_1\cdots n_w}.
\end{align*}
By direct calculation, we have
\begin{align*}
\sum_{k=n_1}^n \binom{n}{k}\binom{k}{n_1}x^ky^{n-k}
&=\sum_{k=0}^{n-n_1} \binom{n}{n_1}\binom{n-n_1}{k} x^{n_1} x^ky^{n-n_1-k}\\
&= \binom{n}{n_1}x^{n_1}(x+y)^{n-n_1}.    
\end{align*}
Hence we have
\begin{align*}
&M_n(\boldsymbol{s}, \boldsymbol{t}; x,y)\\
&=
\sum_{n\ge n_1\ge \cdots \ge n_w\ge 1}
\binom{n}{n_1}x^{n_1}(x+y)^{n-n_1} (-1)^{n_1} \\
&\ \ \ \dfrac{\left( 1-t_1 \right)^{n_{l(1)}-n_{l(1)+1}}
        \cdots 
       \left( 1-t_{d-1} \right)^{n_{l(d-1)}-n_{l(d-1)+1}}
       \left\{   \left( 1-t_{d} \right)^{n_{l(d)}}-1\right\}}
      {n_1\cdots n_w}\\
&=
\sum_{n\ge n_1\ge \cdots \ge n_w\ge 1}
\binom{n}{n_1}(x+y)^n (-1)^{n_1}
\left(\frac{x}{x+y}\right)^{n_1-n_2} \cdots 
\left(\frac{x}{x+y}\right)^{n_{w-1}-n_w}
\left(\frac{x}{x+y}\right)^{n_w}\\
&\hspace{20pt}\dfrac{\left( 1-t_1 \right)^{n_{l(1)}-n_{l(1)+1}}
        \cdots 
       \left( 1-t_{d-1} \right)^{n_{l(d-1)}-n_{l(d-1)+1}}
       \left\{   \left( 1-t_{d} \right)^{n_{l(d)}}-1\right\}}
      {n_1\cdots n_w}.
\end{align*}
By applying $(s_1,\ldots ,s_d)=(1,\ldots ,1)$ in \eqref{eq:SS_1}, this equals
\begin{align*}
\sum_{n\ge n_1\ge \cdots \ge n_w\ge 1} \dfrac{(x+y)^n}{n_1\cdots n_w} 
\left( \prod_{i=1}^{w-1} Q_i^{n_i-n_{i+1}} \right) 
\left(  \left(1-\frac{x}{x+y}(1-t_d)\right)^{n_{l(d)}}- \left(1-\frac{x}{x+y}\right)^{n_{l(d)}}
\right).
\end{align*}
Here $Q_i$ ($1\le i \le w-1$) are defined as
\begin{align*}
 Q_i:
&=\begin{cases}
    1-\frac{x}{x+y} & \text{if } i\not\in \{l(1),\ldots, l(d-1)\},\\
    1-\frac{x}{x+y}(1-t_j) & \text{if } i=l(j)
    \ \ (1\le j\le d-1).
\end{cases}\\
&=\frac{1}{x+y} \times \begin{cases}
    y& \text{if } i\not\in \{l(1),\ldots, l(d-1)\},\\
    (t_jx+y)& \text{if } i=l(j)\ \ (1\le j\le d-1).
\end{cases}
\end{align*}
Therefore we obtain that
\begin{align*}
&M_n(\boldsymbol{s}, \boldsymbol{t}; x,y)\\
&=\sum_{n\ge n_1\ge \cdots \ge n_w\ge 1}
\dfrac{(x+y)^{n-n_1} y^{n_1+n_{l(1)+1}+\cdots n_{l(d-1)+1}
 - n_{l(1)}-\cdots -n_{l(d)}}}
      {n_1\cdots n_w}  \\
&\times \left( t_1 x+y \right)^{n_{l(1)}-n_{l(1)+1}}
           \cdots
         \left( t_{d-1}x+y\right)^{n_{l(d-1)}-n_{l(d-1)+1}}
            \big( (t_dx +y)^{n_{l(d)}} -y^{n_{l(d)}}\big)
\end{align*}
and this proves Eq.~\eqref{eq:maincor}.

\end{proof}

%%%%%%%%%%%%%%%%%%%%%%
%%%%%%%%%%%%%%%%%%%%%%
\section{The Cauchy binomial sums}\label{sec:Cauchybt}
%%%%%%%%%%%%%%%%%%%%%%
%%%%%%%%%%%%%%%%%%%%%%

For a positive integer $n$ and complex constants $\alpha$ and $\beta$, define
\[ (\alpha x+\beta)^{[n]}:= \prod_{k=1}^n (q^{k-1}\alpha x+\beta)
=(\alpha x+\beta)(q\alpha x+\beta) \cdots 
(q^{n-1}\alpha x+\beta)\ \ (n\ge 0).\]
When $q$ tends to $1$, the function
$(\alpha x+\beta)^{[n]}$ tends to the ordinary power $(\alpha x+\beta)^n$.
Under these notations, the Cauchy binomial theorem is stated as follows:
\begin{align}\label{eq:Cauchybinomialthm}
(x+a)^{[n]} = \sum_{k=0}^n \qbinom{n}{k} q^{\binom{k}{2}}
x^k a^{n-k}\ \ (n\ge 0),
\end{align}
where $a$ is a fixed complex number.

It is known that the Cauchy binomial theorem \eqref{eq:Cauchybinomialthm}
and Eq.~\eqref{eq:q-binomial_nc} are equivalent.
In fact, for a variable $a$ which commutes with $x$ and $y$,
we make the substitution 
$x\mapsto xy$ and $y\mapsto ay$ in Eq.~\eqref{eq:q-binomial_nc}.
This substitution is allowed
because $(ay)(xy)= q(xy)(ay)$ holds
and it leads to \eqref{eq:Cauchybinomialthm}.
Conversely, Eq.~\eqref{eq:q-binomial_nc}
can also be obtained from \eqref{eq:Cauchybinomialthm}.
In detail, see \cite[Sec.~1]{J} and \cite[Sec.~2]{K}.

The following lemma, which will be used later, can be proved by direct calculation.
\begin{lemma}\label{lemma:(xy)^k}
For integers $k$, $m$, $n\ge 0$, 
the following identities hold in $\mathbb{C}_q[x,y]$:
\begin{enumerate}
  \item $(xy)^k =q^{\binom{k}{2}}x^ky^k$,
  \item $y^m((tx+a)y)^n= (q^mtx+a)^{[n]}y^{m+n}$.
\end{enumerate}

\end{lemma}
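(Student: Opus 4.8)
The plan is to prove both identities by induction on the exponent that is being raised to a power, using only the defining relation $yx=qxy$ and the $q$-binomial machinery already set up. For part $(i)$, I would induct on $k$. The base case $k=0$ is trivial. For the inductive step, I would write $(xy)^{k+1}=(xy)^k(xy)=q^{\binom{k}{2}}x^ky^k\cdot xy$ using the inductive hypothesis, and then the whole task reduces to moving the single factor $y^k$ past the $x$ on its right. The relation $yx=qxy$ applied $k$ times gives $y^kx=q^kxy^k$, so that $x^ky^k\cdot xy=x^k(q^kxy^k)y=q^kx^{k+1}y^{k+1}$. Combining exponents, $q^{\binom{k}{2}}q^k=q^{\binom{k}{2}+k}=q^{\binom{k+1}{2}}$, which is exactly what is needed. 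The only thing to verify carefully is the commutation rule $y^k x=q^kxy^k$, which itself follows by a one-line induction on $k$ from $yx=qxy$.

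For part $(ii)$, I would induct on $n$, treating $t$ and $a$ as scalars that commute with $x$ and $y$. The base case $n=0$ reads $y^m=y^m$, which holds since the empty product $(q^mtx+a)^{[0]}$ equals $1$. For the inductive step, I would peel off one factor on the right:
\begin{align*}
y^m\big((tx+a)y\big)^{n+1}
&=\Big(y^m\big((tx+a)y\big)^{n}\Big)(tx+a)y\\
&=(q^mtx+a)^{[n]}\,y^{m+n}(tx+a)y,
\end{align*}
using the inductive hypothesis. It then remains to push $y^{m+n}$ past the factor $(tx+a)$. Here I would again invoke $y^{m+n}x=q^{m+n}xy^{m+n}$ to get $y^{m+n}(tx+a)=(q^{m+n}tx+a)y^{m+n}$, so the expression becomes $(q^mtx+a)^{[n]}(q^{m+n}tx+a)y^{m+n+1}$. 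Recognizing the appended scalar factor $(q^{m+n}tx+a)=(q^m\cdot q^n tx+a)$ as precisely the $(n+1)$-st factor in the product defining $(q^mtx+a)^{[n+1]}$ completes the step.

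The main obstacle, such as it is, lies in bookkeeping the $q$-exponents in part $(ii)$: one must check that the scalar $(q^{m+n}tx+a)$ produced by commuting $y^{m+n}$ across $(tx+a)$ matches the last factor $(q^{(n+1)-1}\cdot q^m tx+a)$ of $(q^mtx+a)^{[n+1]}=\prod_{j=1}^{n+1}(q^{j-1}q^m tx+a)$, i.e.\ that $m+n=m+(n+1)-1$, which is immediate. Since both statements reduce to the single auxiliary commutation identity $y^j x=q^j xy^j$ and a short induction, I expect no serious difficulty; the word ``direct calculation'' in the statement is justified, and the entire argument is elementary once that commutation rule is isolated.
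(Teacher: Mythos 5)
Your proof is correct: the paper itself offers no argument beyond the phrase ``can be proved by direct calculation,'' and your inductions on $k$ and $n$, resting on the auxiliary commutation rule $y^j x = q^j x y^j$, are exactly the standard calculation being alluded to. The $q$-exponent bookkeeping in both parts (via $\binom{k+1}{2}=\binom{k}{2}+k$ and the identification of $q^{m+n}tx+a$ as the $(n{+}1)$-st factor of $(q^m tx+a)^{[n+1]}$) checks out, so nothing is missing.
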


The following is the Boyadzhiev-Mneimneh-type theorem for 
$l_n^{\star, q}$ corresponding to the Cauchy binomial theorem.

\begin{thm}\label{thm:M_sum_Cauchy}
For $\boldsymbol{s}=(s_1,\ldots ,s_d)\in \mathbb{Z}_{>0}^d$
and an integer $n\ge 1$, it holds that
\begin{align*}
&    \sum_{k=1}^n \qbinom{n}{k} q^{\binom{k}{2}} x^ka^{n-k}
l_k^{\star, q}(\boldsymbol{s}, \boldsymbol{t})\\
&= \sum_{n\ge n_1\ge \cdots \ge n_w\ge 1}
\frac{(x+a)^{[n-n_1]}}{ [n_1]\cdots [n_w]}
 \left(\prod_{r=1}^{d-1} 
 (q^{n-n_{l(r)}}t_rx+a)^{[n_{l(r)}-n_{l(r)+1}]}\right) \\
&\big( (q^{n-n_{l(d)}}t_dx+a)^{[n_{l(d)}]} -a^{n_{l(d)}}\big)
a^{n_1+n_{l(1)+1}+\cdots n_{l(d-1)+1}-n_{l(1)}-\cdots
- n_{l(d)}}
\end{align*}

\end{thm}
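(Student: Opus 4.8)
The plan is to deduce this identity from the main theorem (Theorem~\ref{thm:mainthm}) by applying the substitution $x\mapsto xy$, $y\mapsto ay$, exactly the device used above to pass between \eqref{eq:q-binomial_nc} and \eqref{eq:Cauchybinomialthm}. Since $(ay)(xy)=q(xy)(ay)$, this substitution extends to an $R$-algebra endomorphism $\phi$ of $R_q[x,y]$ (with $a$ treated as a central scalar that fixes each $l_k^{\star,q}$), so I may apply $\phi$ to both sides of \eqref{eq:mainthm} and compare.

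First I would treat the left-hand side. Using Lemma~\ref{lemma:(xy)^k}(i) together with the centrality of $a$, one gets $\phi(x^ky^{n-k})=(xy)^k(ay)^{n-k}=q^{\binom{k}{2}}a^{n-k}x^ky^n$. Hence $\phi\bigl(M_n^q(\boldsymbol{s},\boldsymbol{t};x,y)\bigr)$ equals
\[
\Bigl(\sum_{k=1}^n \qbinom{n}{k} q^{\binom{k}{2}} x^k a^{n-k}\, l_k^{\star,q}(\boldsymbol{s},\boldsymbol{t})\Bigr)\, y^n,
\]
that is, the desired left-hand side multiplied on the right by $y^n$.

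Next I would push $\phi$ through the right-hand side of \eqref{eq:mainthm}. Each factor is sent to an expression of the form $\bigl((\alpha x+a)y\bigr)^m$, where $\alpha=1$ for $(x+y)^{n-n_1}$, $\alpha=t_r$ for the blocks $(t_rx+y)^{\,\cdot}$, and $\alpha=0$ for the pure powers of $y$ (since $ay=((0\cdot x+a)y)$). Reading the product from left to right and applying Lemma~\ref{lemma:(xy)^k}(ii) repeatedly, a block $\bigl((\alpha x+a)y\bigr)^m$ preceded by an accumulated right factor $y^M$ becomes $(q^{M}\alpha x+a)^{[m]}y^{M+m}$. The essential \emph{book-keeping} step is that the accumulated exponent $M$ just before the $r$-th block $(t_rx+y)^{n_{l(r)}-n_{l(r)+1}}$ telescopes to $n-n_{l(r)}$; this produces precisely the shifts $q^{\,n-n_{l(r)}}$ in the statement, and the same count shows that the pure-$y$ blocks contribute exactly the $a$-exponent $n_1+n_{l(1)+1}+\cdots+n_{l(d-1)+1}-n_{l(1)}-\cdots-n_{l(d)}$. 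After the final block (coming from $t_d$, preceded by $M=n-n_{l(d)}$) the total $y$-degree is $n$, so $\phi$ of the right-hand side equals the claimed expression multiplied on the right by $y^n$.

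Finally, because the normal-ordered monomials $x^iy^j$ form an $R$-basis of $R_q[x,y]$, the common right factor $y^n$ may be cancelled, yielding the theorem. The only genuine obstacle is the exponent bookkeeping in the third step; I expect the telescoping claim $M=n-n_{l(r)}$ to follow by a short induction on $r$ using $l(r)=l(r-1)+s_r$ and the fact that each intervening pure-$y$ block contributes $n_{l(r-1)+1}-n_{l(r)}$ to $M$, after which the remaining manipulations are routine.
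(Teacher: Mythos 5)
Your proposal is correct and follows essentially the same route as the paper's own proof: apply the substitution $x\mapsto xy$, $y\mapsto ay$ (legitimate since $(ay)(xy)=q(xy)(ay)$) to both sides of Theorem \ref{thm:mainthm}, normal-order using Lemma \ref{lemma:(xy)^k}, and cancel the common right factor $y^n$. The telescoping bookkeeping you single out, namely that the accumulated $y$-exponent before the $r$-th block is $n-n_{l(r)}$, is exactly the computation the paper performs when it ``moves all $y$'s to the right,'' so there is no gap.
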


\begin{proof}
Following the above argument, 
we make the substitution $x\mapsto xy$ and $y\mapsto ay$
in Theorem \ref{thm:mainthm}.
Then, by using Lemma \ref{lemma:(xy)^k} (i),
the left-hand side of \eqref{eq:mainthm}
becomes 
\[    \sum_{k=1}^n \qbinom{n}{k} q^{\binom{k}{2}} x^ky^n a^{n-k}
l_k^{\star, q}(\boldsymbol{s}, \boldsymbol{t}).
\]
The right-hand side of \eqref{eq:mainthm} becomes 
\begin{align*}
&\sum_{n\ge n_1\ge \cdots \ge n_w\ge 1}
\frac{((x+a)y)^{n-n_1}}{ [n_1]\cdots [n_w]}
\left( \prod_{r=1}^{d-1}  
y^{n_{l(r-1)+1}-n_{l(r)}} ((t_rx+a)y)^{n_{l(r)}-n_{l(r)+1}}
\right)\\
&\times y^{n_{l(d-1)+1}-n_{l(d)}}
\left(    ((t_dx+a)y)^{n_{l(d)}} - (ay)^{n_{l(d)}}\right)
 a^{n_1+n_{l(1)+1}+\cdots n_{l(d-1)+1}-n_{l(1)}-\cdots
- n_{l(d)}}.
\end{align*}
By using Lemma \ref{lemma:(xy)^k} (ii) and 
moving all $y$'s to the right, a part of the above equation can be written as follows:
\begin{align*}
&((x+a)y)^{n-n_1}
\left( \prod_{r=1}^{d-1}  
y^{n_{l(r-1)+1}-n_{l(r)}} ((t_rx+a)y)^{n_{l(r)}-n_{l(r)+1}}
\right)\\
&\times y^{n_{l(d-1)+1}-n_{l(d)}}
\left(    ((t_dx+a)y)^{n_{l(d)}} - (ay)^{n_{l(d)}}\right)\\
&=
(x+a)^{[n-n_1]} \left(\prod_{r=1}^{d-1} 
 (q^{n-n_{l(r)}}t_rx+a)^{[n_{l(r)}-n_{l(r)+1}]}\right)
\left(  (q^{n-n_{l(d)}}t_dx+a)^{[n_{l(d)}]} - a^{n_{l(d)}}\right) y^n.
\end{align*}
Therefore, we obtain that
\begin{align*}
&    \sum_{k=1}^n \qbinom{n}{k} q^{\binom{k}{2}} x^ky^n a^{n-k}
l_k^{\star, q}(\boldsymbol{s}, \boldsymbol{t})\\
&= \sum_{n\ge n_1\ge \cdots \ge n_w\ge 1}
\frac{(x+a)^{[n-n_1]}}{ [n_1]\cdots [n_w]}
 \left(\prod_{r=1}^{d-1} 
 (q^{n-n_{l(r)}}t_rx+a)^{[n_{l(r)}-n_{l(r)+1}]}\right)\\
&\left(  (q^{n-n_{l(d)}}t_dx+a)^{[n_{l(d)}]} - a^{n_{l(d)}}\right)
a^{n_1+n_{l(1)+1}+\cdots n_{l(d-1)+1}-n_{l(1)}-\cdots - n_{l(d)}}
y^n.
\end{align*}
By canceling $y^n$ from both sides,
we obtain the desired equation.
\end{proof}

\begin{rem}
Other types of formulas for $q$-analogues of multiple harmonic sums, such as the duality relation
\cite[Theorem 1]{B} and its extension,
the Ohno type identity \cite[Theorem 2.1]{SY}, are known.
These formulas are generalizations of
\cite[Theorem 4]{D} and \cite[Theorem~2]{P}.
\end{rem}
\vspace{5pt}

\begin{rem}
Bradley \cite[Corollary~3]{B} proved an
identity
\[ \sum_{k=1}^n(-1)^{k+1} q^{\binom{k}{2}}
\qbinom{n}{k} \sum_{k\ge k_1\ge \cdots \ge k_d\ge 1}
\frac{1}{[k_1]\cdots [k_d]}
= \frac{1}{[n]^d}\ \ (n, d \ge 1).\]
This identity can be also obtained from our 
Theorem \ref{thm:M_sum_Cauchy}
by applying $x=-1$, $a=1$, $\boldsymbol{s}=(1,\ldots ,1)$
and $\boldsymbol{t}=(1,\ldots ,1)$.
\end{rem}
\vspace{10pt}

\noindent \textbf{\large Acknowledgment}\\[5pt]
This work was supported by JSPS KAKENHI Grant Number 25K06944.
\vspace{10pt}

\vspace{10pt}

\noindent \textbf{Address}: Department of Robotics, Osaka Institute of Technology\\
1-45 Chaya-machi, Kita-ku, Osaka 530-8585, Japan\\

\noindent \textbf{E-mail}: ken.kamano@oit.ac.jp

\end{document}